\newcommand{\GJMS}{P}
\newcommand{\dilation}{\delta}
\newcommand{\ltrans}{l}
\newcommand{\inversion}{\iota}
\newcommand{\Hsymbol}{S_{H}}
\newcommand{\Hpsido}{\Psi_{H}}
\newcommand{\HSobolev}{W_{H}}
	\def\MR#1{}
\title{Analysis of the critical CR GJMS operator}
\author{Yuya Takeuchi}
\address{Department of Mathematics \\ Graduate School of Science \\ Osaka University
	\\ 1-1 Machikaneyama-cho, Toyonaka, Osaka 560-0043, Japan}
\curraddr{Division of Mathematics \\ Faculty of Pure and Applied Sciences \\ University of Tsukuba
	\\ 1-1-1 Tennodai, Tsukuba, Ibaraki 305-8571 Japan}
\email{ytakeuchi@math.tsukuba.ac.jp, yuya.takeuchi.math@gmail.com}
\subjclass[2010]{32V20, 58J50}
\keywords{critical CR GJMS operator, CR Q-curvature, \Szego projection, CR pluriharmonic function}
\thanks{This work was supported by JSPS Research Fellowship for Young Scientists
and JSPS KAKENHI Grant Numbers JP16J04653, JP19J00063, and JP21K13792.}
\begin{document}

\begin{abstract}
	The critical CR GJMS operator on a strictly pseudoconvex CR manifold
	is a non-hypoelliptic CR invariant differential operator.
	We prove that,
	under the embeddability assumption,
	it is essentially self-adjoint and has closed range.
	Moreover,
	its spectrum is discrete,
	and the eigenspace corresponding to each non-zero eigenvalue
	is a finite-dimensional subspace of the space of smooth functions.
	As an application,
	we obtain a necessary and sufficient condition
	for the existence of a contact form with zero CR $Q$-curvature.
\end{abstract}

\maketitle

\tableofcontents

\section{Introduction}
\label{section:introduction}

It is one of the most important topics in both conformal and CR geometries
to study invariant differential operators.
Analytic properties of such operators are deeply connected to geometric problems,
such as the Yamabe problem and the constant $Q$-curvature problem.

In conformal geometry,
Graham, Jenne, Mason, and Sparling~\cite{Graham-Jenne-Mason-Sparling1992} have constructed
a family of conformally invariant differential operators,
called GJMS operators.
Let $(N, g)$ be a Riemannian manifold of dimension $n$.
For $k \in \bbN$ and $k \leq n / 2$ if $n$ is even,
the $k$-th GJMS operator $P_{k}$ is a differential operator acting on $C^{\infty}(N)$
such that its principal part coincides with the $k$-th power of the Laplacian,
and it has the following transformation law under the conformal change $\whg = e^{2 \Upsilon} g$:
\begin{equation}
	e^{(n/2 + k) \Upsilon} \whP_{k}
	= P_{k} e^{(n/2 - k) \Upsilon},
\end{equation}
where $\whP_{k}$ is defined in terms of $\whg$.
Analytic properties of $P_{k}$ on closed manifolds are quite simple.
It follows from standard elliptic theory that
$P_{k}$ is essentially self-adjoint and has closed range.
Moreover,
its spectrum is a discrete subset of $\bbR$,
and the eigenspace corresponding to each eigenvalue is a finite-dimensional subspace of $C^{\infty}(N)$.

In CR geometry,
Gover and Graham~\cite{Gover-Graham2005}
have introduced a family of CR invariant differential operators,
called CR GJMS operators,
via Fefferman construction.
Let $(M, T^{1,0}M, \theta)$ be a $(2n+1)$-dimensional pseudo-Hermitian manifold
and $k \in \bbN$ with $k \leq n+1$.
The \emph{$k$-th CR GJMS operator} $P_{k}$ is a differential operator acting on $C^{\infty}(M)$
such that its principal part is the $k$-th power of the sub-Laplacian,
and its transformation rule under the conformal change $\hat{\theta} = e^{\Upsilon} \theta$
is given by
\begin{equation}
	e^{(n+1+k) \Upsilon / 2} \whP_{k}
	= P_{k} e^{(n+1-k) \Upsilon / 2},
\end{equation}
where $\whP_{k}$ is defined in terms of $\hat{\theta}$.
Although $P_{k}$ is not elliptic,
it is known to be \emph{subelliptic} for $1 \leq k \leq n$~\cite{Ponge2008-Book};
in particular, the same statements as in the previous paragraph
also hold for $P_{k}$ on closed manifolds.
However,
the kernel of the \emph{critical CR GJMS operator} $P_{n + 1}$
contains the space of CR pluriharmonic functions,
which is infinite-dimensional on closed embeddable CR manifolds (\cref{rem:infine-dimensional-space}).
Moreover,
there exist $L^{2}$ non-smooth CR pluriharmonic functions,
which implies that $P_{n + 1}$ is not even hypoelliptic (\cref{rem:L^2-non-smooth}).
In this paper,
nevertheless,
we will prove that similar results to the above are true for $P_{n+1}$
on the orthogonal complement of $\Ker P_{n+1}$.
In what follows,
we simply write $\GJMS$ for the critical CR GJMS operator.

In the remainder of this section,
let $(M, T^{1,0}M, \theta)$ be a closed embeddable pseudo-Hermitian manifold of dimension $2n+1$.
Here,
``embeddable'' means that $(M, T^{1, 0} M)$ can be CR embedded into some $\bbC^{N}$.
Note that the embeddability automatically holds if $n \geq 2$~\cite{Boutet_de_Monvel1975}.
We consider $\GJMS$ as an unbounded operator on $L^{2}(M)$ with domain
\begin{equation}
	\Dom \GJMS
	= \Set{u \in L^{2}(M) | \text{$\GJMS u$ in the weak sense is in $L^{2}(M)$}}.
\end{equation}
We will first prove

\begin{theorem}
\label{thm:self-adjointness-of-critical-CR-GJMS-operator}
	The operator $\GJMS$ is self-adjoint and has closed range.
\end{theorem}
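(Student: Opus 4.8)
The plan is to treat $\GJMS$ as a formally self-adjoint differential operator of Heisenberg order $2(n+1)$ and to exploit the Heisenberg pseudodifferential calculus $\Hpsido$, using the embeddability assumption to control the degenerate directions of its symbol. Write $\GJMS_{\min}$ for the closure of $\GJMS$ on $C^{\infty}(M)$ and $\GJMS_{\max} = (\GJMS_{\min})^{*}$ for its maximal closed extension, that is, the operator with domain $\{u \in L^{2}(M) : \GJMS u \in L^{2}(M)\}$ where $\GJMS u$ is taken distributionally. Since $\GJMS$ is formally self-adjoint, proving that $\GJMS_{\max}$ is self-adjoint is equivalent to proving essential self-adjointness, $\GJMS_{\min} = \GJMS_{\max}$, which amounts to showing that $C^{\infty}(M)$ is a core for $\GJMS_{\max}$.

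The central object is a parametrix for $\GJMS$. The operator $\GJMS$ is not Heisenberg-elliptic --- this is exactly why it fails to be hypoelliptic --- but its model operator on $\Hgroup$ is invertible off the directions corresponding to CR and conjugate-CR functions. Concretely, I would construct $Q \in \Hpsido$ of order $-2(n+1)$ together with projectors $\Pi, \Pi'$ such that
\begin{equation}
	Q\GJMS = I - \Pi + R, \qquad \GJMS Q = I - \Pi' + R',
\end{equation}
where $R, R'$ are smoothing and $\Pi, \Pi'$ agree, modulo smoothing, with the orthogonal projector onto the space $\mathcal{P}$ of CR pluriharmonic functions. Here the embeddability assumption enters decisively: it guarantees, via Boutet de Monvel--Sj\"ostrand and Kohn's closed-range theorem for $\bar\partial_{b}$, that the \Szego{} projector $S$ is an order-zero element of $\Hpsido$ and that the $L^{2}$-closure $\overline{\mathcal{P}}$ is a closed subspace on which the smooth pluriharmonics are dense.

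With the parametrix in hand, essential self-adjointness follows. Given $u \in \operatorname{Dom}(\GJMS_{\max})$, the identity $u = Q\GJMS u + \Pi u - Ru$ decomposes $u$ into a piece $Q\GJMS u \in \HSobolev^{2(n+1)}$, a smooth piece $Ru$, and a piece $\Pi u \in \overline{\mathcal{P}} = \Ker \GJMS_{\max}$. The first two lie in $\operatorname{Dom}(\GJMS_{\min})$ because $C^{\infty}(M)$ is dense in $\HSobolev^{2(n+1)}$ and $\GJMS$ maps $\HSobolev^{2(n+1)}$ continuously into $L^{2}(M)$; the third lies in $\operatorname{Dom}(\GJMS_{\min})$ because the smooth pluriharmonics, which $\GJMS$ annihilates, are dense in $\overline{\mathcal{P}}$. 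Hence $u \in \operatorname{Dom}(\GJMS_{\min})$, giving $\GJMS_{\min} = \GJMS_{\max}$.

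The same parametrix yields the a priori estimate $\lVert u \rVert_{\HSobolev^{2(n+1)}} \leq C(\lVert \GJMS u \rVert_{L^{2}} + \lVert u \rVert_{L^{2}})$, and for $u \perp \overline{\mathcal{P}}$ the term $\Pi u$ becomes smoothing, so this estimate, together with the compactness of $\HSobolev^{2(n+1)} \hookrightarrow L^{2}(M)$ and the injectivity of $\GJMS$ on $\overline{\mathcal{P}}^{\perp}$, upgrades by a standard Peetre-type absorption argument to the coercive estimate $\lVert u \rVert_{L^{2}} \leq C \lVert \GJMS u \rVert_{L^{2}}$ on $\operatorname{Dom}(\GJMS_{\max}) \cap \overline{\mathcal{P}}^{\perp}$. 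Since $\GJMS_{\max}$ is self-adjoint with $\Ker \GJMS_{\max} = \overline{\mathcal{P}}$, this coercivity forces $\operatorname{Ran}(\GJMS_{\max}) = \overline{\mathcal{P}}^{\perp}$ to be closed. I expect the main obstacle to be the construction of $Q$ and the precise identification of the range of $\Pi$ with $\overline{\mathcal{P}}$: because $\GJMS$ is Heisenberg-elliptic only away from its characteristic set, the parametrix must be patched with the \Szego{} projector along the degenerate directions, and verifying that the resulting projector captures exactly the CR pluriharmonic functions --- neither more nor less --- is the delicate point where the embeddability hypothesis and the explicit form of the model operator on $\Hgroup$ are indispensable.
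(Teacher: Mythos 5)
Your overall strategy---a Heisenberg parametrix for $\GJMS$ built from the partial inverses of the $L_{\mu}$ and patched with the Szeg\H{o} projector and its conjugate, regularity of $\Dom \GJMS$ modulo the kernel, and a compactness argument for the coercive estimate---is the same as the paper's. However, there is a genuine gap: you repeatedly identify $\Ker \GJMS_{\max}$ with $\overline{\scrP}$, and this identification is false in general. The paper introduces the supplementary space $\scrW = \Ker \GJMS \cap \overline{\scrP}^{\perp}$ precisely because the inclusion $\overline{\scrP} \subset \Ker \GJMS$ can be strict: for every $n \geq 2$ there are closed embeddable examples with $\scrW \neq 0$. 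Consequently your claimed ``injectivity of $\GJMS$ on $\overline{\scrP}^{\perp}$'' and the coercive estimate $\norm{u}_{L^{2}} \leq C \norm{\GJMS u}_{L^{2}}$ on $\Dom(\GJMS_{\max}) \cap \overline{\scrP}^{\perp}$ fail outright whenever $\scrW \neq 0$: any nonzero $w \in \scrW$ lies in $\Dom(\GJMS_{\max}) \cap \overline{\scrP}^{\perp}$ yet satisfies $\GJMS w = 0$. The closed-range half of your argument therefore breaks as written. The repair is exactly what the paper does: prove the lower bound on $(\Ker \GJMS)^{\perp}$ rather than on $\overline{\scrP}^{\perp}$, using only the inclusion $\Ran \Pi \subset \Ker \GJMS$ (which follows from $\GJMS (S + \ovS) = 0$) and never its converse. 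Relatedly, your stated goal of verifying that the projector captures the CR pluriharmonic functions ``neither more nor less'' is not merely delicate but impossible at the exact level, and fortunately unnecessary: the approximate projector only needs to have range inside $\Ker \GJMS$ and to agree with $S + \ovS$ modulo lower order.

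The essential self-adjointness half of your argument is sound once this is kept in mind, since there you only use the inclusion $\Ran \Pi \subset \overline{\scrP}$ (up to complexification), not the equality: the decomposition $u = Q \GJMS u + \Pi u - R u$ does show that $C^{\infty}(M)$ is a core, because $\Ran \Pi$ lies in the $L^{2}$-closure of the smooth CR holomorphic and conjugate CR holomorphic functions, on which $\GJMS$ vanishes and in which smooth elements are dense. The paper takes a marginally shorter route, showing directly that the maximal closed extension is symmetric, which suffices because $\GJMS_{\max} = (\GJMS_{\min})^{\ast}$; the two arguments are equivalent in substance.
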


Moreover,
we obtain the following theorem on the spectrum of $\GJMS$:

\begin{theorem}
\label{thm:spectrum-of-critical-CR-GJMS-operator}
	The spectrum of $\GJMS$
	is a discrete subset in $\bbR$
	and consists only of eigenvalues.
	Moreover,
	the eigenspace corresponding to each non-zero eigenvalue of $\GJMS$
	is a finite-dimensional subspace of $C^{\infty}(M)$.
	Furthermore,
	$\Ker \GJMS \cap C^{\infty}(M)$ is dense in $\Ker \GJMS$.
\end{theorem}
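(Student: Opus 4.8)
The plan is to reduce the theorem to Theorem~\ref{thm:self-adjointness-of-critical-CR-GJMS-operator} together with a single subelliptic estimate valid on the orthogonal complement of the kernel. Since $\GJMS$ is self-adjoint with closed range, its spectrum is contained in $\bbR$ and there is an orthogonal decomposition $L^{2}(M) = \Ker \GJMS \oplus \operatorname{ran} \GJMS$ with $\operatorname{ran} \GJMS$ closed. Set $H_{0} := (\Ker \GJMS)^{\perp} = \operatorname{ran} \GJMS$ and let $\GJMS_{0}$ be the part of $\GJMS$ in $H_{0}$. Then $\GJMS_{0}$ is self-adjoint on $H_{0}$ and is both injective and surjective, so by the closed graph theorem it admits a bounded inverse; in particular there is $c > 0$ with $\lVert \GJMS u \rVert_{L^{2}} \geq c \lVert u \rVert_{L^{2}}$ for all $u$ in the domain of $\GJMS_{0}$. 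Hence $\sigma(\GJMS) \cap (-c, c) = \{0\}$, so $0$ is isolated in the spectrum, and $\sigma(\GJMS) = \{0\} \cup \sigma(\GJMS_{0})$ with $\sigma(\GJMS_{0}) \subseteq \bbR \setminus (-c, c)$.

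The essential input, which I expect to be available from the Heisenberg-calculus analysis behind Theorem~\ref{thm:self-adjointness-of-critical-CR-GJMS-operator}, is a parametrix $Q \in \Hpsido^{-s}$ for $\GJMS$, where $s := 2n + 2$ is the Heisenberg order of $\GJMS$, satisfying
\begin{equation}
	Q \GJMS \equiv \operatorname{Id} - S \pmod{\Hpsido^{-\infty}}.
\end{equation}
Here $S$ is the orthogonal projection of $L^{2}(M)$ onto $\Ker \GJMS$, which in this setting is a \Szego-type Heisenberg pseudodifferential operator of order $0$. Although $\GJMS$ is not hypoelliptic, its Heisenberg principal symbol is invertible off the directions responsible for CR pluriharmonic functions, which is precisely what allows such a $Q$ to exist modulo the projector $S$. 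For $u \in H_{0}$ one has $S u = 0$, hence $u = Q \GJMS u - R u$ with $R \in \Hpsido^{-\infty}$, and therefore
\begin{equation}
	\lVert u \rVert_{\HSobolev^{s}} \leq C \bigl( \lVert \GJMS u \rVert_{L^{2}} + \lVert u \rVert_{L^{2}} \bigr) \leq C' \lVert \GJMS u \rVert_{L^{2}},
\end{equation}
the last step using the lower bound above. Since the inclusion $\HSobolev^{s} \hookrightarrow L^{2}(M)$ is compact on the closed manifold $M$, the inverse $\GJMS_{0}^{-1} \colon H_{0} \to \HSobolev^{s} \hookrightarrow L^{2}(M)$ is a compact self-adjoint operator. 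By the spectral theorem for compact self-adjoint operators, $\sigma(\GJMS_{0})$ is a discrete set accumulating only at $\pm\infty$ and consisting of eigenvalues of finite multiplicity; together with the isolated eigenvalue $0$ this shows that $\sigma(\GJMS)$ is discrete and consists only of eigenvalues.

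Smoothness of the non-zero eigenspaces follows by bootstrapping the same parametrix. If $\GJMS u = \lambda u$ with $\lambda \neq 0$, then $u \in H_{0}$ and $u \equiv \lambda Q u \pmod{C^{\infty}(M)}$; since $Q$ has Heisenberg order $-s$, membership $u \in \HSobolev^{t}$ improves to $u \in \HSobolev^{t + s}$, so iterating from $u \in \HSobolev^{0} = L^{2}(M)$ gives $u \in \bigcap_{t} \HSobolev^{t} = C^{\infty}(M)$. Thus each non-zero eigenspace is a finite-dimensional subspace of $C^{\infty}(M)$. For the density statement, I would use that $S$, being a Heisenberg pseudodifferential operator of order $0$ modulo smoothing, maps $C^{\infty}(M)$ into itself. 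Given $u \in \Ker \GJMS$, choose $f_{j} \in C^{\infty}(M)$ with $f_{j} \to u$ in $L^{2}(M)$; then $S f_{j} \in \Ker \GJMS \cap C^{\infty}(M)$ and $S f_{j} \to S u = u$, so $\Ker \GJMS \cap C^{\infty}(M)$ is dense in $\Ker \GJMS$.

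The main obstacle is the first display of the second paragraph, that is, constructing the Heisenberg parametrix $Q$ and identifying the projector onto $\Ker \GJMS$ with a \Szego-type operator, so that $\GJMS$ becomes hypoelliptic \emph{modulo} its kernel. Once this is in place, every remaining step is a standard consequence of the spectral theorem for compact operators, of Rellich compactness for Heisenberg Sobolev spaces, and of elliptic-type bootstrapping.
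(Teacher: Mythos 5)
Your architecture coincides with the paper's: everything is made to rest on a Heisenberg parametrix for $\GJMS$ modulo the orthogonal projection onto $\Ker\GJMS$, after which Rellich compactness (\cref{lem:Rellich's-lemma}), the spectral theorem for compact self-adjoint operators, bootstrapping, and the order-zero mapping property of the projection deliver all four assertions. Every step downstream of your displayed identity $Q\GJMS \equiv I - S \pmod{\Hpsido^{-\infty}}$ is correct, and your detour through the lower bound $\norm{\GJMS u}_{0} \geq c \norm{u}_{0}$ (rather than the paper's direct use of the compact self-adjoint partial inverse $G$) is an inessential variation.

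The genuine gap is the input you yourself flag as the main obstacle, and it is worth being precise about why it is not simply ``available from the analysis behind \cref{thm:self-adjointness-of-critical-CR-GJMS-operator}.'' First, the projection onto $\Ker\GJMS$ is not a single \Szego-type projector: the kernel contains $\overline{\scrP}$, whose projector has principal part $S + \ovS$ (\Szego projection plus its conjugate --- your $S$ clashes with the paper's notation for the \Szego projection itself), and possibly also the finite-dimensional supplementary space $\scrW$; making $S + \ovS$ into even an approximate idempotent already requires \cref{lem:composition-of-Szego-projection-and-its-conjugate}, i.e.\ $S\ovS \in \Hpsido^{-1}(M)$. Second, the existence of the \Szego projectors and partial inverses of $\Box_{b}$, $\overline{\Box}_{b}$ as Heisenberg operators is exactly where embeddability enters, which your proposal never invokes. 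Third, and most importantly, what the calculus produces is only an approximate projection $\Pi_{\infty}$ with $\GJMS G_{\infty} + \Pi_{\infty} \sim I$ and $\Ran \Pi_{\infty} \subset \Ker\GJMS$, a priori not equal to it; identifying the exact orthogonal projection $\Pi$ as an element of $\Hpsido^{0}(M)$ requires first knowing that $\GJMS$ has closed range (so that $\Pi$ and $G$ exist at all), and then the algebraic identity $\Pi - \Pi_{\infty} = -\Pi R_{\infty}^{\prime} = (R_{\infty}^{\prime})^{\ast}\Pi R_{\infty}^{\prime} - \Pi_{\infty}^{\ast} R_{\infty}^{\prime}$ with $R_{\infty}^{\prime}$ smoothing. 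This is the content of \cref{thm:partial-inverse-of-critical-CR-GJMS-operator} and the proposition constructing $G_{\infty}$, $\Pi_{\infty}$ --- the bulk of \cref{section:proofs-of-the-main-results} --- and without it your subelliptic estimate and bootstrap have nothing to run on.
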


In dimension three,
Hsiao~\cite{Hsiao2015} has shown
\cref{thm:self-adjointness-of-critical-CR-GJMS-operator,thm:spectrum-of-critical-CR-GJMS-operator}
by using Fourier integral operators with complex phase.
Our proofs are similar to Hsiao's ones,
but based on the \emph{Heisenberg calculus},
the theory of Heisenberg pseudodifferential operators.
The use of these operators
simplifies some proofs and gives better regularity results.

We will also give some applications
of these theorems and their proofs.
Let $\scrP$ and $\overline{\scrP}$
be the space of CR pluriharmonic functions and its $L^{2}$-closure respectively.
Then $\Ker \GJMS$ contains $\overline{\scrP}$,
and the \emph{supplementary space} $\scrW$ is defined by
\begin{equation}
	\scrW
	\coloneqq \Ker \GJMS \cap \overline{\scrP}^{\perp}.
\end{equation}

\begin{proposition}
\label{prop:supplementary-space}
	The supplementary space $\scrW$
	is a finite dimensional subspace of $C^{\infty}(M)$.
\end{proposition}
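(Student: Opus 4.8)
The plan is to realize $\scrW$ as the kernel of a single maximally hypoelliptic operator obtained by adding a zeroth-order correction to $\GJMS$, and then to read off finite-dimensionality and smoothness from the Heisenberg-calculus regularity theory.

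First I would record a purely functional-analytic identification. Let $\Pi$ be the orthogonal projection of $L^{2}(M)$ onto $\overline{\scrP}$. Since $\GJMS$ is self-adjoint by \cref{thm:self-adjointness-of-critical-CR-GJMS-operator} and $\overline{\scrP}\subseteq\Ker\GJMS$, for $\phi\in\overline{\scrP}$ and $u\in\dom\GJMS$ we have $\langle\GJMS u,\phi\rangle=\langle u,\GJMS\phi\rangle=0$, so $\Ran\GJMS\perp\overline{\scrP}$. Hence if $(\GJMS+\Pi)u=0$ then $\GJMS u=-\Pi u$ lies in $\overline{\scrP}^{\perp}\cap\overline{\scrP}=\{0\}$, forcing $\GJMS u=0$ and $\Pi u=0$, i.e. $u\in\scrW$; conversely every $w\in\scrW$ satisfies $(\GJMS+\Pi)w=0$. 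Since $\Pi$ is bounded, $\dom(\GJMS+\Pi)=\dom\GJMS$, and therefore $\Ker(\GJMS+\Pi)=\scrW$ exactly. It thus suffices to show that $\GJMS+\Pi$ is hypoelliptic with finite-dimensional kernel.

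Second, I would identify $\Pi$ as a Heisenberg operator. Because $M$ is embeddable, the \Szego projection $S$ onto the space of CR functions, together with its conjugate $\overline{S}$ onto anti-CR functions, are Heisenberg pseudodifferential operators of order $0$. As the complexification of $\scrP$ is the sum of the CR and anti-CR functions (a CR function $f$ gives the pluriharmonic pair $\Re f,\Im f$, and pluriharmonics are real parts of CR functions), $\overline{\scrP}$ is built from $\Ran S$ and $\Ran\overline{S}$, and I would use this to show that $\Pi\in\Hpsido$ has order $0$, up to the finite-rank projection onto the constants, which affects none of the conclusions.

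Third — this is the analytic heart — I would prove that $\GJMS+\Pi$ is maximally hypoelliptic by verifying the Rockland condition, namely that its Heisenberg principal symbol is invertible in every nontrivial irreducible representation of $\Hgroup$. The model critical GJMS operator on $\Hgroup$ fails this only along the pluriharmonic directions, which are precisely the range of the symbol of $\Pi$; adding $\Pi$ then restores invertibility, in exact parallel with the classical fact that the Kohn Laplacian becomes hypoelliptic after adding the \Szego projection although it is not hypoelliptic on its own. The crux is the explicit model computation showing $\ker\sigma(\GJMS)=\Ran\sigma(\Pi)$ in each representation, and this is where the whole argument really lives; the realization of $\Pi$ as a Heisenberg operator in the second step is the secondary difficulty.

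Finally, once a two-sided parametrix $Q\in\Hpsido$ with $Q(\GJMS+\Pi)=I+R$ and $R$ smoothing is in hand, the usual argument applies: on $\Ker(\GJMS+\Pi)$ one has $u=-Ru$, so $u\in C^{\infty}(M)$, and the kernel embeds into the finite-dimensional space $\Ker(I+R)$ because $R$ is compact on $L^{2}(M)$. Combined with the first step this gives exactly that $\scrW=\Ker(\GJMS+\Pi)$ is a finite-dimensional subspace of $C^{\infty}(M)$. I expect the verification of the Rockland condition for $\GJMS+\Pi$ in the third step to be the main obstacle.
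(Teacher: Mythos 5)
Your first step is correct: since $\Ran\GJMS\perp\overline{\scrP}$, one has $\Ker(\GJMS+\pi)=\Ker\GJMS\cap\overline{\scrP}^{\perp}=\scrW$ (I write $\pi$ for your projection onto $\overline{\scrP}$, to avoid clashing with the paper's $\Pi$, the projection onto $\Ker\GJMS$), and a two-sided parametrix modulo smoothing operators would indeed give the conclusion. The gap is in your third step, which is where you yourself locate the ``analytic heart.'' The Heisenberg principal symbol of order $2n+2$ annihilates all of $\Hpsido^{2n+1}(M)$ (\cref{prop:Heisenberg-principal-symbol}), so $\sigma_{2n+2}(\GJMS+\pi)=\sigma_{2n+2}(\GJMS)$: adding the order-zero operator $\pi$ does not change the principal symbol at all, hence cannot ``restore invertibility'' in any representation. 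The Rockland condition for $\GJMS+\pi$ is verbatim the Rockland condition for $\GJMS$, which fails, and by \cref{prop:equivalent-conditions-for-existence-of-parametrix} no parametrix of order $-(2n+2)$ exists. Your own analogy makes this point: $\Box_{b}+S$ is invertible, but its inverse $N_{n}+S$ has Heisenberg order $0$, not $-2$, and the reason $N_{n}+S$ lies in the calculus is not a symbol computation but the hard theorem (requiring embeddability) that $S\in\Hpsido^{0}(M)$. The analogous inputs you would need here --- that $\pi\in\Hpsido^{0}(M)$ and that $\GJMS+\pi$ has an inverse mod smoothing of the form $G_{\infty}+(\text{order }0)$ --- are essentially equivalent to the proposition itself, and they also carry the unresolved issues of your second step: the orthogonal projection onto the closure of $\Ran S+\Ran\ovS$ is not $S+\ovS$ (these ranges are not orthogonal), and identifying $\overline{\scrP}$ with that closure up to finite-dimensional discrepancies is itself nontrivial.

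The paper avoids all of this. It uses only the inclusion $\Ran\Pi_{\infty}\subset\Ran\Pi_{0}\subset\overline{\scrP}$ and $\pi\GJMS=0$: composing $\GJMS G_{\infty}+\Pi_{\infty}-I=R_{\infty}^{\prime}\in\Hpsido^{-\infty}(M)$ with $\pi$ gives $\Pi_{\infty}=\pi+\pi R_{\infty}^{\prime}$, whence
\begin{equation}
	\pi-\Pi_{\infty}
	= (R_{\infty}^{\prime})^{\ast}\pi R_{\infty}^{\prime}-\Pi_{\infty}^{\ast}R_{\infty}^{\prime}
\end{equation}
is smoothing; combined with $\Pi\sim\Pi_{\infty}$, the orthogonal projection $\Pi-\pi$ onto $\scrW$ is smoothing, hence compact, hence of finite rank with range in $C^{\infty}(M)$. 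This is in fact the lemma you would have to prove anyway to manufacture your parametrix $Q$, so I suggest reorganizing your argument around it rather than around the Rockland condition.
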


In dimension three,
\cref{prop:supplementary-space} has been already proved by Hsiao~\cite{Hsiao2015}.
However,
in this case,
the author~\cite{Takeuchi2020-Paneitz} has shown that $\scrW$ is equal to zero.
On the other hand,
for each $n \geq 2$,
there exists a closed pseudo-Hermitian manifold $(M, T^{1, 0} M, \theta)$ of dimension $2n+1$
such that $\scrW \neq 0$;
see the proof of~\cite{Takeuchi2018}*{Theorem 1.6}.

We will also tackle the zero CR $Q$-curvature problem.
The \emph{CR $Q$-curvature} $Q$,
introduced by Fefferman and Hirachi~\cite{Fefferman-Hirachi2003},
is a smooth function on $M$ such that
it transforms as follows under the conformal change $\whxth = e^{\Upsilon} \theta$:
\begin{equation}
\label{eq:transformation-law-of-CR-Q-curvature}
	\whQ = e^{-(n+1) \Upsilon} (Q + \GJMS \Upsilon),
\end{equation}
where $\whQ$ is defined in terms of $\whxth$.
Marugame~\cite{Marugame2018} has proved that
the total CR $Q$-curvature
\begin{equation}
	\overline{Q}
	\coloneqq \int_{M} Q \, \theta \wedge (d \theta)^{n}
\end{equation}
is always equal to zero.
Moreover,
the CR $Q$-curvature itself is identically zero for pseudo-Einstein contact forms~\cite{Fefferman-Hirachi2003}.
Hence it is natural to ask
whether $(M, T^{1,0}M)$ admits a contact form whose CR $Q$-curvature vanishes identically;
this is the \emph{zero CR $Q$-curvature problem}.
This problem has been solved affirmatively
for embeddable CR three-manifolds by the author~\cite{Takeuchi2020-Paneitz}.
However,
it is still open in general.
By the transformation law \cref{eq:transformation-law-of-CR-Q-curvature},
it is necessary that
\begin{equation}
	\int_{M} f Q \, \theta \wedge (d \theta)^{n} = 0
\end{equation}
holds for any $f \in \Ker \GJMS \cap C^{\infty}(M)$.
Note that this condition is independent of the choice of $\theta$.
The following proposition states that it is also a sufficient condition
for embeddable CR manifolds:

\begin{proposition}
\label{prop:zero-CR-Q-curvature-problem}
	There exists a contact form $\hat{\theta}$ on $M$
	such that the CR $Q$-curvature $\whQ$ vanishes identically
	if and only if $Q \perp (\Ker \GJMS \cap C^{\infty}(M))$.
\end{proposition}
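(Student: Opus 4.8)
By the transformation law \cref{eq:transformation-law-of-CR-Q-curvature}, a contact form $\hat\theta = e^{\Upsilon}\theta$ has $\whQ = e^{-(n+1)\Upsilon}(Q + \GJMS\Upsilon)$, so the assertion is equivalent to the solvability of $\GJMS\Upsilon = -Q$ with $\Upsilon \in C^{\infty}(M)$. The necessity is the easy direction: if $\GJMS\Upsilon = -Q$ for some $\Upsilon \in C^{\infty}(M)$, then for every $f \in \Ker\GJMS \cap C^{\infty}(M)$ the (formal) self-adjointness of $\GJMS$ gives
\begin{equation}
	\int_{M} f Q\, \theta \wedge (d\theta)^{n}
	= -\int_{M} f (\GJMS\Upsilon)\, \theta \wedge (d\theta)^{n}
	= -\int_{M} (\GJMS f)\, \Upsilon\, \theta \wedge (d\theta)^{n}
	= 0,
\end{equation}
which is exactly $Q \perp (\Ker\GJMS \cap C^{\infty}(M))$.

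For the sufficiency my plan is to solve $\GJMS\Upsilon = -Q$ first in $L^{2}$ and then bootstrap to smoothness. First I would upgrade the hypothesis to $L^{2}$-orthogonality against the entire kernel: since \cref{thm:spectrum-of-critical-CR-GJMS-operator} asserts that $\Ker\GJMS \cap C^{\infty}(M)$ is dense in $\Ker\GJMS$ and the pairing with the fixed function $Q$ is continuous, $Q \perp (\Ker\GJMS \cap C^{\infty}(M))$ forces $Q \perp \Ker\GJMS$ in $L^{2}(M)$. Next, by \cref{thm:self-adjointness-of-critical-CR-GJMS-operator} the operator $\GJMS$ is self-adjoint with closed range, so $\operatorname{Ran}\GJMS = (\Ker\GJMS)^{\perp}$; hence $-Q \in \operatorname{Ran}\GJMS$ and there is some $\Upsilon$ in the domain of $\GJMS$ with $\GJMS\Upsilon = -Q$. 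Removing its kernel part, I may assume $\Upsilon \in (\Ker\GJMS)^{\perp}$, and since $\GJMS$ and $Q$ are real this $\Upsilon$ can be taken real.

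The hard part will be to show that this $L^{2}$-solution is genuinely smooth: because $\GJMS$ is not hypoelliptic, smoothness of $\GJMS\Upsilon = -Q$ does not by itself imply smoothness of $\Upsilon$. To overcome this I would use the Heisenberg calculus already employed in the proofs of \cref{thm:self-adjointness-of-critical-CR-GJMS-operator,thm:spectrum-of-critical-CR-GJMS-operator} to construct a Green operator $G$ for $\GJMS$, namely a partial inverse with $G\GJMS = \mathrm{Id} - \Pi$, where $\Pi$ is the orthogonal projection onto $\Ker\GJMS$, and to prove that $G$ maps $C^{\infty}(M)$ into itself. Granting this mapping property, the relation $\Upsilon \in (\Ker\GJMS)^{\perp}$ gives
\begin{equation}
	\Upsilon = (\mathrm{Id} - \Pi)\Upsilon = G\GJMS\Upsilon = -GQ \in C^{\infty}(M).
\end{equation}
The smoothness of $G$ is precisely a statement of hypoellipticity of $\GJMS$ transverse to its kernel, and I expect it to follow by building a Heisenberg parametrix that inverts $\GJMS$ modulo smoothing operators and modulo $\Pi$, together with the fact that the kernel projection---governed by the \Szego projection onto CR pluriharmonic functions---preserves $C^{\infty}(M)$ on an embeddable manifold. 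Once $\Upsilon \in C^{\infty}(M)$ is secured, the contact form $\hat\theta = e^{\Upsilon}\theta$ satisfies $\whQ = 0$ by the displayed transformation law, finishing the proof.
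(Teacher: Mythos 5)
Your proposal is correct and follows essentially the same route as the paper: upgrade the hypothesis to $Q \perp \Ker\GJMS$ via the density statement in \cref{thm:spectrum-of-critical-CR-GJMS-operator}, then set $\Upsilon = -GQ$ and invoke the fact that the partial inverse $G$ is a Heisenberg pseudodifferential operator preserving $C^{\infty}(M)$, which is exactly \cref{thm:partial-inverse-of-critical-CR-GJMS-operator}. The "hard part" you flag is thus already established earlier in the paper, and the detour through an abstract $L^{2}$-solution is unnecessary since $\GJMS(-GQ) = -(I-\Pi)Q = -Q$ directly.
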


This paper is organized as follows.
In \cref{section:CR-manifolds},
we recall basic facts on CR manifolds.
\cref{section:model-operators-on-the-Heisenberg-group}
deals with convolution operators on the Heisenberg group,
which is a ``model'' of the Heisenberg calculus.
In \cref{section:Heisenberg-calculus},
we give a brief exposition of the Heisenberg calculus.
\cref{section:proofs-of-the-main-results}
is devoted to proofs of the main results in this paper.

\section{CR manifolds}
\label{section:CR-manifolds}

Let $M$ be an orientable smooth $(2n+1)$-dimensional manifold without boundary.
A \emph{CR structure} is a rank $n$ complex subbundle $T^{1, 0} M$
of the complexified tangent bundle $T M \otimes \mathbb{C}$ such that
\begin{equation}
	T^{1, 0} M \cap T^{0, 1} M = 0, \qquad
	\comm{\Gamma(T^{1 ,0} M)}{\Gamma(T^{1, 0} M)} \subset \Gamma(T^{1, 0} M),
\end{equation}
where $T^{0, 1} M$ is the complex conjugate of $T^{1, 0} M$ in $T M \otimes \bbC$.
Define a hyperplane bundle $H M$ of $T M$ by $H M \coloneqq \Re T^{1, 0} M$.
A typical example of CR manifolds is a real hypersurface $M$ in an $(n+1)$-dimensional complex manifold $X$;
this $M$ has the canonical CR structure
\begin{equation}
	T^{1, 0} M
	\coloneqq T^{1, 0} X |_{M} \cap (T M \otimes \bbC).
\end{equation}

Take a nowhere-vanishing real one-form $\theta$ on $M$
such that $\theta$ annihilates $T^{1, 0} M$.
The \emph{Levi form} $\mathcal{L}_{\theta}$ with respect to $\theta$ is the Hermitian form
on $T^{1,0} M$ defined by
\begin{equation}
	\calL_{\theta}(Z, W)
	\coloneqq - \sqrt{-1} \, d \theta(Z, \ovW), \qquad Z, W \in T^{1, 0} M.
\end{equation}
A CR structure $T^{1, 0} M$ is said to be \emph{strictly pseudoconvex}
if the Levi form is positive definite for some $\theta$;
such a $\theta$ is called a \emph{contact form}.
The triple $(M, T^{1, 0} M, \theta)$ is called a \emph{pseudo-Hermitian manifold}.
Denote by $T$ the \emph{Reeb vector field} with respect to $\theta$; 
that is,
the unique vector field satisfying
\begin{equation}
	\theta(T) = 1, \qquad T \contr d\theta = 0.
\end{equation}

Define an operator $\delb_{b} \colon C^{\infty}(M) \to \Gamma((T^{0, 1} M)^{\ast})$ by
\begin{equation}
	\delb_{b} f
	\coloneqq d f|_{T^{0, 1} M}.
\end{equation}
A smooth function $f$ is called a \emph{CR holomorphic function}
if $\delb_{b} f = 0$.
A \emph{CR pluriharmonic function} is a real-valued smooth function
that is locally the real part of a CR holomorphic function.
We denote by $\scrP$ the space of CR pluriharmonic functions.

\begin{remark}
\label{rem:infine-dimensional-space}
	It is known that the spaces of CR holomorphic functions and CR pluriharmonic functions
	are infinite-dimensional if there exists a not locally constant CR holomorphic function $f$.
	Suppose to the contrary that the space of CR holomorphic functions is finite-dimensional.
	Then $f$ is algebraically dependent over $\bbC$
	since $f^{k}$ is also CR holomorphic for $k \in \bbN$.
	This implies that $f$ is locally constant,
	which is a contradiction.
	Taking the real part yields that
	the space of CR pluriharmonic functions is also infinite-dimensional.
\end{remark}

\begin{remark}
\label{rem:L^2-non-smooth}
	Let
	\begin{equation}
		S^{2 n + 1}
		= \Set{z = (z^{1}, \dots , z^{n + 1}) \in \bbC^{n + 1} | \abs{z}^{2} = 1}
	\end{equation}
	be the unit sphere in $\bbC^{n + 1}$ with the standard CR structure.
	The function $u = \log \abs{1 - z^{1}}^{2}$ is $L^{2}$ but not continuous.
	Moreover,
	$u_{\varepsilon} = \log \abs{1 + \varepsilon - z^{1}}^{2}$ is CR pluriharmonic
	and $u_{\varepsilon} \to u$ as $\varepsilon \to + 0$ in $L^{2}(S^{2 n + 1})$.
	Hence $u$ is an example of $L^{2}$ non-smooth CR pluriharmonic functions.
\end{remark}

The Levi form induces a Hermitian metric on $(T^{0, 1} M)^{\ast}$.
By using this Hermitian metric and the volume form $\theta \wedge (d \theta)^{n}$,
we obtain the formal adjoint
$\delb_{b}^{\ast} \colon \Gamma((T^{0, 1} M)^{\ast}) \to C^{\infty}(M)$ of $\delb_{b}$.
The \emph{Kohn Laplacian} $\Box_{b}$ and the \emph{sub-Laplacian} $\Delta_{b}$
are defined by
\begin{equation}
	\Box_{b}
	\coloneqq \delb_{b}^{\ast} \delb_{b},
	\qquad
	\Delta_{b}
	\coloneqq \Box_{b} + \overline{\Box}_{b}.
\end{equation}
Note that
\begin{equation}
	\Box_{b}
	= \frac{1}{2} \Delta_{b} + \frac{\sqrt{-1}}{2} n T;
\end{equation}
see~\cite{Lee1986}*{Theorem  2.3} for example.
The Gaffney extension of the Kohn Laplacian,
also denoted by $\Box_{b}$,
is a self-adjoint operator on $L^{2}(M)$.
The kernel $\Ker \Box_{b}$ is the space of $L^{2}$ CR holomorphic functions.

The \emph{critical CR GJMS operator} $\GJMS$
is a real differential operator of order $2n+2$ acting on $C^{\infty}(M)$.
It is known to be formally self-adjoint~\cite{Gover-Graham2005}*{Proposition 5.1}.
Moreover,
it annihilates CR pluriharmonic functions~\cite{Hirachi2014}*{Section 3.2}.

A CR manifold $(M, T^{1,0}M)$ is said to be \emph{embeddable}
if there exists a smooth embedding of $M$ to some $\mathbb{C}^{N}$
such that $T^{1, 0} M = T^{1,0} \mathbb{C}^{N}|_{M} \cap (T M \otimes \mathbb{C})$.
It is known that a closed strictly pseudoconvex CR manifold $(M, T^{1,0}M)$ is embeddable
if and only if $\Box_{b}$ has closed range~\cites{Boutet_de_Monvel1975,Kohn1986}.

\section{Model operators on the Heisenberg group}
\label{section:model-operators-on-the-Heisenberg-group}

The Heisenberg group $G$ is the Lie group with the underlying manifold $\bbR \times \bbC^{n}$
and the multiplication
\begin{equation}
	(t, z) \cdot (t^{\prime}, z^{\prime})
	\coloneqq (t + t^{\prime} + 2 \Im (z \cdot \ovz^{\prime}), z + z^{\prime}).
\end{equation}
The left translation by $(t, z)$ and the inversion on $G$
are denoted by $\ltrans_{(t, z)}$ and $\inversion$ respectively.

For $\alpha = 1, \dots , n$,
we introduce a left-invariant complex vector field $Z_{\alpha}^{0}$ by
\begin{equation}
	Z_{\alpha}^{0}
	\coloneqq \pdv{}{z^{\alpha}} + \sqrt{-1} \ovz^{\alpha} \pdv{}{t}.
\end{equation}
The canonical CR structure $T^{1, 0} G$
is spanned by $Z_{1}^{0}, \dots , Z_{n}^{0}$.
Define a left-invariant one-form $\theta^{0}$ on $G$ by
\begin{equation}
	\theta^{0}
	\coloneqq d t + \sqrt{-1} \sum_{\alpha = 1}^{n}
		(z^{\alpha} d \ovz^{\alpha} - \ovz^{\alpha} d z^{\alpha}).
\end{equation}
Then $\theta^{0}$ annihilates $T^{1, 0} G$
and the Levi form $\calL_{\theta^{0}}$ satisfies
$\calL_{\theta^{0}}(Z_{\alpha}^{0}, Z_{\beta}^{0}) = 2 \delta_{\alpha \beta}$;
in particular,
$\theta^{0}$ is a contact form on $G$.
The Reeb vector field $T^{0}$ coincides with $\pdvf{}{t}$.

The Lie algebra $\frakg$ of $G$ is isomorphic to $\bbR \times \bbC^{n}$ as a linear space via
\begin{equation}
	\frakg \to \bbR \times \bbC^{n};
	\qquad
	t T^{0} + 2 \sum_{\alpha = 1}^{n} \Re(z^{\alpha} Z_{\alpha}^{0})
	\mapsto (t, z).
\end{equation}
Under this identification,
the Lie bracket on $\frakg$ is given by
\begin{equation}
	\comm{(t, z)}{(t^{\prime}, z^{\prime})}
	= (4 \Im(z \cdot \ovz^{\prime}), 0).
\end{equation}
Moreover,
the exponential map $\frakg \to G$ coincides with the identity map on $\bbR \times \bbC^{n}$.
Furthermore,
the dual $\frakg^{\ast}$ of $\frakg$ is also canonically isomorphic to $\bbR \times \bbC^{n}$
as a linear space.
We write this linear coordinate as $(\tau, \zeta)$.

For $r \in \bbR_{+}$,
the parabolic dilation $\dilation_{r}$ on $\bbR \times \bbC^{n}$ is defined by
\begin{equation}
	\dilation_{r}(t, z)
	= (r^{2} t, r z).
\end{equation}
This dilation defines automorphisms on $G$, $\frakg$, and $\frakg^{\ast}$,
for which we will use the same letter $\dilation_{r}$ by abuse of notation.
In what follows,
the term ``homogeneous'' is defined in terms of $\dilation_{r}$.
We will sometimes write $v$ for a point of $G$.
Denote by $d v$ the Lebesgue measure on $G$,
which is a Haar measure on $G$.

Let $\Schwartz(G)$ (resp.\ $\Schwartz(\frakg^{\ast})$)
be the space of rapidly decreasing functions on $G$ (resp.\ $\frakg^{\ast}$),
and $\Schwartz^{\prime}(G)$ (resp.\ $\Schwartz^{\prime}(\frakg^{\ast})$)
be that of tempered distributions on $G$ (resp.\ $\frakg^{\ast}$).
The coupling of $f \in \Schwartz(G)$ and $k \in \Schwartz^{\prime}(G)$ is written as $\coupling{k}{f}$.
The pull-back by $\dilation_{r}$
induces endomorphisms on $\Schwartz(G)$ and $\Schwartz(\frakg^{\ast})$,
and these extend to those on $\Schwartz^{\prime}(G)$ and $\Schwartz^{\prime}(\frakg^{\ast})$.
The Fourier transform $\Fourier$ defines isomorphisms
\begin{equation}
	\Schwartz(G) \xrightarrow{\cong} \Schwartz(\frakg^{\ast}),
	\qquad
	\Schwartz^{\prime}(G) \xrightarrow{\cong} \Schwartz^{\prime}(\frakg^{\ast});
\end{equation}
in our convention,
the Fourier transform $\Fourier (f)$ of $f \in \Schwartz(G)$ is defined by
\begin{equation}
	\Fourier (f)(\tau, \zeta)
	\coloneqq \int_{G} e^{- \sqrt{-1} (t \tau + \Re(z \cdot \ovxz))} f(t, z) d v.
\end{equation}

Now we consider ``model operators'' of the Heisenberg calculus.
For $m \in \bbR$,
set
\begin{equation}
	\Hsymbol^{m}
	\coloneqq
	\Set{a \in C^{\infty}(\frakg^{\ast} \setminus \{0\}) | \dilation_{r}^{\ast} a = r^{m} a},
\end{equation}
which is the space of \emph{Heisenberg symbols} of order $m$.
Let $\scrG^{m}$ be the space of $g \in \Schwartz^{\prime}(\frakg^{\ast})$ such that
$g$ is smooth on $\frakg^{\ast} \setminus \{0\}$ and satisfies
\begin{equation}
	\dilation_{r}^{\ast} g
	= r^{m} g + (r^{m} \log r) h,
\end{equation}
where $h \in \Schwartz^{\prime}(\frakg^{\ast})$
with $\supp h \subset \{0\}$ and $\dilation_{r}^{\ast} h = r^{m} h$.
The restriction map $\scrG^{m} \to \Hsymbol^{m}$ is known to be surjective~\cite{Beals-Greiner1988}*{Proposition 15.8}.
Moreover,
the inverse Fourier transform gives an isomorphism
\begin{equation}
	\Fourier^{-1} \colon \scrG^{m} \xrightarrow{\cong} \scrK_{-m-2n-2},
\end{equation}
where $\scrK_{l}$ is the space of $k \in \Schwartz^{\prime}(G)$ such that
$k$ is smooth on $G \setminus \{0\}$ and satisfies
\begin{equation}
	\dilation_{r}^{\ast} k
	= r^{l} k + (r^{l} \log r) \psi
\end{equation}
for a homogeneous polynomial $\psi$ of degree $l$~\cite{Beals-Greiner1988}*{Proposition 15.24}.
We also introduce a function space on which Heisenberg symbols act.
Let $\Schwartz_{0}(G)$ be the space of $f \in \Schwartz(G)$ such that
\begin{equation}
	\int_{G} \psi(v) f(v) d v = 0
\end{equation}
for any polynomial $\psi$ on $G$.
This condition is equivalent to that $\Fourier (f) \in \Schwartz(\frakg^{\ast})$
vanishes to infinite order at the origin.

We denote by $\Hpsido^{m}$ the space of endomorphisms $A$ on $\Schwartz_{0}(G)$
commuting with left translation and admitting its formal adjoint $A^{\ast}$ of homogeneous degree $m$;
that is,
\begin{equation}
	A^{\ast} \circ \dilation_{r}^{\ast}
	= r^{m} \dilation_{r}^{\ast} \circ A^{\ast}.
\end{equation}
We would like to define a canonical isomorphism between $\Hsymbol^{m}$ and $\Hpsido^{m}$.

\begin{proposition}
\label{prop:well-defined-of-Hpsido}
	Let $a \in \Hsymbol^{m}$ and
	take $g \in \scrG^{m}$ with $g |_{\frakg^{\ast} \setminus \{0\}} = a$.
	Then the convolution operator
	\begin{equation}
	\label{eq:definition-of-O(p)}
		f \mapsto [\Fourier^{-1}(g) \ast f](v) \coloneqq \coupling{\Fourier^{-1}(g)}{f \circ \ltrans_{v} \circ \inversion}
	\end{equation}
	defines an endomorphism on $\Schwartz_{0}(G)$
	and is independent of the choice of $g$.
	Moreover,
	this operator commutes with left translation and is homogeneous of degree $m$.
	Furthermore,
	it is equal to zero if and only if $a = 0$.
\end{proposition}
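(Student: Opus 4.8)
The plan is to set $k \coloneqq \Fourier^{-1}(g)$, which lies in $\scrK_{-m-2n-2}$ by the isomorphism recalled above, and to study the right-convolution operator $A \colon f \mapsto k \ast f$ given by $(k \ast f)(v) = \coupling{k}{f \circ \ltrans_{v} \circ \inversion}$. Since $\ltrans_{v}$ and $\inversion$ are polynomial diffeomorphisms of $G$, the function $f \circ \ltrans_{v} \circ \inversion$ belongs to $\Schwartz(G)$ whenever $f$ does, so the pairing is well defined and smooth in $v$. I would dispose of the formal assertions first and isolate the mapping property $A(\Schwartz_{0}(G)) \subset \Schwartz_{0}(G)$ as the analytic core.

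The elementary observation underlying three of the claims is that right-convolution of an element of $\Schwartz_{0}(G)$ against a polynomial vanishes: for a polynomial $P$, the measure-preserving substitution $u = v w^{-1}$ (using that $G$ is unimodular and $\inversion^{\ast} dv = dv$) turns $\int_{G} P(w) f(v w^{-1}) \, dw$ into $\int_{G} P(u^{-1} v) f(u) \, du$, and $u \mapsto P(u^{-1} v)$ is again a polynomial, so the integral is zero by the moment conditions defining $\Schwartz_{0}(G)$. Independence of the choice of $g$ follows at once: if $g_{1}, g_{2} \in \scrG^{m}$ both restrict to $a$, then $g_{1} - g_{2}$ is supported at the origin, hence $\Fourier^{-1}(g_{1} - g_{2})$ is a polynomial, and the two operators agree on $\Schwartz_{0}(G)$. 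The same computation gives $a = 0 \Rightarrow A = 0$. Conversely, if $A = 0$, then evaluating at the identity $v = 0$ and using that $f \mapsto f \circ \inversion$ preserves $\Schwartz_{0}(G)$ shows that $k$ annihilates all of $\Schwartz_{0}(G)$; by Parseval this means $g = \Fourier(k)$ annihilates every Schwartz function vanishing to infinite order at the origin, in particular every element of $C^{\infty}_{c}(\frakg^{\ast} \setminus \{0\})$, so $\supp g \subset \{0\}$ and $a = g|_{\frakg^{\ast} \setminus \{0\}} = 0$.

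Commutation with left translation is immediate from associativity: for $u \in G$, $A(\ltrans_{u}^{\ast} f)(v) = \coupling{k}{w \mapsto f(u v w^{-1})} = (A f)(u v) = (\ltrans_{u}^{\ast} A f)(v)$, so $A \circ \ltrans_{u}^{\ast} = \ltrans_{u}^{\ast} \circ A$. For homogeneity I would change variables by $w \mapsto \dilation_{r} w$ in the pairing; combining the degree $-m-2n-2$ homogeneity of $k$ with the Jacobian factor $r^{2n+2}$ of $\dilation_{r}$ yields $A \circ \dilation_{r}^{\ast} = r^{m} \dilation_{r}^{\ast} \circ A$ on $\Schwartz_{0}(G)$, the logarithmic term of $\dilation_{r}^{\ast} k$ dropping out because its polynomial factor is annihilated by the moment observation above.

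The main obstacle is the mapping property $k \ast f \in \Schwartz_{0}(G)$ for $f \in \Schwartz_{0}(G)$, that is, that $k \ast f$ is Schwartz with $\Fourier(k \ast f)$ vanishing to infinite order at the origin. Smoothness and the commutation of left-invariant derivatives with the convolution are automatic, but rapid decay is not, since $k$ is homogeneous rather than compactly supported. I would prove decay by exploiting the moment conditions on $f$: for $v$ of large homogeneous norm the mass of $f(v w^{-1})$ concentrates near $w = v$, where $k$ is smooth, so Taylor-expanding $k$ about $w = v$ and integrating term by term converts each additional order of the expansion into a vanishing moment of $f$ while the corresponding derivative $\partial^{\alpha} k(v)$ decays one homogeneous degree faster; iterating yields decay faster than any power, and applying this to left-invariant derivatives of $f$ gives the full Schwartz estimates. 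For the vanishing of $\Fourier(k \ast f)$ at the origin I would pass to the Fourier side, where the Heisenberg group law renders $k \ast f$ as a twisted convolution of $g = \Fourier(k)$ with $\Fourier(f)$; since this twisted product respects the filtration by order of vanishing at $0 \in \frakg^{\ast}$ and $\Fourier(f)$ already vanishes there to infinite order, so does $\Fourier(k \ast f)$. Verifying this compatibility of the twisted product with the grading at the origin is the delicate point, and it is where I expect the bulk of the work to lie.
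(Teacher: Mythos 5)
Your proposal reconstructs from scratch what the paper simply outsources. The paper's own proof handles only the independence of $g$ and the equivalence ``$O^{0}(a) = 0$ iff $a = 0$'' by hand --- in essentially the same way you do, via the observations that polynomial kernels annihilate $\Schwartz_{0}(G)$ and that $\Fourier(\Schwartz_{0}(G))$ contains $C^{\infty}_{c}(\frakg^{\ast} \setminus \{0\})$ --- and cites \cite{Christ-Geller-Glowacki-Polin1992}*{Proposition 2.2} for the analytic core, namely that $k \ast f \in \Schwartz_{0}(G)$ together with left-invariance and homogeneity. Your direct verifications of left-invariance and homogeneity are fine (including the point that the logarithmic term of $\dilation_{r}^{\ast} k$ is a polynomial and hence dies against $\Schwartz_{0}(G)$), and your Taylor-expansion argument for rapid decay is the standard one; note only that for $m \geq 0$ the kernel $k$ is a genuine distribution near its singular point, so the expansion about $w = v$ must be carried out as a distributional pairing --- harmless here, since for $\abs{v}$ large the singular point sits where $f \circ \ltrans_{v} \circ \inversion$ and all its derivatives are rapidly decreasing.

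The one step I would not accept as proposed is the verification of the moment conditions on $k \ast f$ via the Fourier side. The twisted product that the group law induces on $\Fourier$-transforms is still convolution-like in the $\zeta$-variables: it spreads values and does not respect the order of vanishing of either factor at a point (only on the slice $\tau = 0$ does it degenerate to a pointwise product), and in addition $g$ itself blows up at the origin when $m < 0$, so a usable ``filtration'' statement would also have to absorb that singularity. Fortunately the fact you need has a one-line direct proof by exactly the polynomial trick you already deploy: for any polynomial $\psi$,
\[
	\int_{G} \psi(v) \, (k \ast f)(v) \, dv
	= \coupling{k}{\, w \mapsto \int_{G} \psi(u w) f(u) \, du \,}
	= \coupling{k}{0}
	= 0,
\]
because $u \mapsto \psi(u w)$ is a polynomial for each fixed $w$ (the group law is polynomial) and $f \in \Schwartz_{0}(G)$; the interchange of the $v$-integration with the pairing against $k$ is justified once the rapid decay of $k \ast f$ is established. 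Replacing the twisted-product argument by this computation closes the only real gap, and the rest of your outline goes through.
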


\begin{definition}
	For $a \in \Hsymbol^{m}$,
	an operator $O^{0}(a) \colon \Schwartz_{0}(G) \to \Schwartz_{0}(G)$
	is defined by \cref{eq:definition-of-O(p)}.
\end{definition}

\begin{proof}[Proof of \cref{prop:well-defined-of-Hpsido}]
	It follows from~\cite{Christ-Geller-Glowacki-Polin1992}*{Proposition 2.2}
	that \cref{eq:definition-of-O(p)} defines an endomorphism on $\Schwartz_{0}(G)$
	commuting with left translation and homogeneous of degree $m$.
	Assume that $g^{\prime}$ also satisfies $g^{\prime} |_{\frakg^{\ast} \setminus \{0\}} = a$.
	Then the support of $g^{\prime} - g$ is contained in $\{0\} \subset \frakg^{\ast}$.
	Hence $\Fourier^{-1}(g^{\prime} - g)$ is a polynomial on $G$,
	and so $\Fourier^{-1}(g^{\prime} - g) \ast f = 0$ for any $f \in \Schwartz_{0}(G)$.
	This implies the independence of the choice of $g$.
	Next,
	suppose that the operator \cref{eq:definition-of-O(p)} is equal to zero.
	For any $f \in \Schwartz_{0}(G)$,
	we have $\coupling{\Fourier^{-1}(g)}{f \circ \inversion} = 0$.
	Hence $g$ annihilates $\Fourier(\Schwartz_{0}(G))$.
	Since $C^{\infty}_{c}(\frakg^{\ast} \setminus \{0\})$ is a subspace of $\Fourier(\Schwartz_{0}(G))$,
	the support of $g$ is contained in $\{0\} \subset \frakg^{\ast}$.
	Therefore $a = g |_{\frakg^{\ast} \setminus \{0\}} = 0$.
\end{proof}

The operator $O^{0}(a)$ is well-behaved under formal adjoint and composition.

\begin{theorem}
\label{thm:adjoint-and-composition-of-model-operators}
	(i) The formal adjoint of $O^{0}(a)$,
	$a \in \Hsymbol^{m}$,
	is given by $O^{0}(\ova)$.
	In particular,
	$O^{0}(a)$ is formally self-adjoint if and only if $a$ is real-valued.

	(ii) There exists a bilinear product
	\begin{equation}
		\ast^{0} \colon \Hsymbol^{m_{1}} \times \Hsymbol^{m_{2}} \to \Hsymbol^{m_{1} + m_{2}}
	\end{equation}
	such that $O^{0}(a_{1}) O^{0}(a_{2}) = O^{0}(a_{1} \ast^{0} a_{2})$
	for any $a_{1} \in \Hsymbol^{m_{1}}$ and $a_{2} \in \Hsymbol^{m_{2}}$.
\end{theorem}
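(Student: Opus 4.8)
The plan is to translate both statements into facts about convolution kernels. By \cref{prop:well-defined-of-Hpsido}, the operator $O^{0}(a)$ is the left convolution $f \mapsto k \ast f$ with kernel $k = \Fourier^{-1}(g) \in \scrK_{-m-2n-2}$, where $g \in \scrG^{m}$ is any extension of $a$. Throughout I would use that $G$ is unimodular and that its inversion is $\inversion(v) = -v$ in the coordinates $(t, z)$, so that the formal manipulations of convolutions are justified on $\Schwartz_{0}(G)$.

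For (i), I would first identify the formal adjoint of a left convolution operator with respect to the $L^{2}$ inner product on $G$. A change of variables $v \mapsto v w^{-1}$ inside the pairing $(k \ast f, h)_{L^{2}(G)}$ shows that, for all $f, h \in \Schwartz_{0}(G)$, one has $(k \ast f, h)_{L^{2}(G)} = (f, k^{\ast} \ast h)_{L^{2}(G)}$ with $k^{\ast}(v) \coloneqq \overline{k(\inversion(v))}$; hence the formal adjoint of $O^{0}(a)$ is again a left convolution operator, the one with kernel $k^{\ast}$. It remains to compute its Heisenberg symbol. Since $\inversion(v) = -v$ and $dv$ is invariant under $v \mapsto -v$, a short computation with the Fourier transform $\Fourier$ gives $\Fourier(k^{\ast}) = \overline{\Fourier(k)} = \overline{g}$; in particular $\overline{g}$ lies in $\scrG^{m}$ and restricts to $\ova$ on $\frakg^{\ast} \setminus \{0\}$. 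Thus the formal adjoint of $O^{0}(a)$ equals $O^{0}(\ova)$, and the self-adjointness criterion is the case $a = \ova$.

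For (ii), the composition $O^{0}(a_{1}) O^{0}(a_{2})$ sends $f \in \Schwartz_{0}(G)$ to $k_{1} \ast (k_{2} \ast f)$, so by associativity of convolution it ought to be the convolution operator with kernel $k_{1} \ast k_{2}$. Once this convolution is available as an element of $\scrK$, the degree is fixed by a dilation count: if $k_{i} \in \scrK_{l_{i}}$ then $k_{1} \ast k_{2} \in \scrK_{l_{1} + l_{2} + 2n + 2}$, and with $l_{i} = -m_{i} - 2n - 2$ this exponent equals $-(m_{1} + m_{2}) - 2n - 2$. Consequently the isomorphism $\Fourier^{-1} \colon \scrG^{m_{1} + m_{2}} \xrightarrow{\cong} \scrK_{-(m_{1} + m_{2}) - 2n - 2}$ produces a unique $g \in \scrG^{m_{1} + m_{2}}$ with $\Fourier^{-1}(g) = k_{1} \ast k_{2}$, and I would define $a_{1} \ast^{0} a_{2} \coloneqq g |_{\frakg^{\ast} \setminus \{0\}} \in \Hsymbol^{m_{1} + m_{2}}$. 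By construction $O^{0}(a_{1} \ast^{0} a_{2}) = O^{0}(a_{1}) O^{0}(a_{2})$, and bilinearity of $\ast^{0}$ follows from bilinearity of convolution combined with the injectivity of $O^{0}$ established in \cref{prop:well-defined-of-Hpsido}.

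The main obstacle is precisely the well-definedness of $k_{1} \ast k_{2}$: both kernels are singular at the origin, so their convolution must be defined through a regularization, and one has to check both that it converges and that it remains in the class $\scrK$ together with the admissible logarithmic terms. This is exactly the content of the convolution calculus on homogeneous groups developed in \cite{Christ-Geller-Glowacki-Polin1992}, which I would invoke to close this step; the rest of the argument is bookkeeping of homogeneity degrees and Fourier transforms.
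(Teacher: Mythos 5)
Your proposal is correct and follows essentially the same route as the paper: identify the formal adjoint of the convolution operator as convolution with $\overline{k \circ \inversion}$, whose Fourier transform is $\ovg$, and reduce the composition statement to the convolution calculus for homogeneous kernels in \cite{Christ-Geller-Glowacki-Polin1992} (the paper cites \cite{Ponge2008-Book}*{Proposition 3.1.3(2)} for this same fact). Your version merely spells out the change-of-variables and homogeneity bookkeeping that the paper leaves to the references.
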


\begin{proof}
	(i) Take $g \in \scrG^{m}$ with $g |_{\frakg^{\ast} \setminus \{0\}} = a$
	The formal adjoint of $O^{0}(a)$ is given by
	the convolution with respect to 
	\begin{equation}
		\overline{\Fourier^{-1}(g)} \circ \inversion
		= \Fourier^{-1}(\ovg);
	\end{equation}
	see~\cite{Christ-Geller-Glowacki-Polin1992}*{Section 3}.
	Thus we have $(O^{0}(a))^{\ast} = O^{0}(\ova)$.

	(ii) See \cite{Ponge2008-Book}*{Proposition 3.1.3(2)}.
\end{proof}

In particular,
$O^{0}$ defines an injective map from $\Hsymbol^{m}$ to $\Hpsido^{m}$.
In fact,
this is an isomorphism.

\begin{proposition}
	For any $A \in \Hpsido^{m}$,
	there exists the unique $a \in \Hsymbol^{m}$ such that $A = O^{0}(a)$.
\end{proposition}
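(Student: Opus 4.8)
The plan is to invert the construction of $O^{0}$ by recovering the symbol $a$ from the convolution kernel of $A$. Uniqueness is immediate from \cref{prop:well-defined-of-Hpsido}: if $O^{0}(a) = A = O^{0}(a^{\prime})$, then $O^{0}(a - a^{\prime}) = 0$ and hence $a = a^{\prime}$. So only existence requires work, and the first task is to realize $A$ as a convolution operator. Because $A$ commutes with left translation, I would define a functional on $\Schwartz_{0}(G)$ by $\coupling{k}{h} \coloneqq [A(h \circ \inversion)](0)$. This is continuous---continuity of $A$ follows from the existence of the formal adjoint together with the closed graph theorem---so Hahn--Banach extends it to some $k \in \Schwartz^{\prime}(G)$. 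Writing $\lambda_{w} f \coloneqq f \circ \ltrans_{w^{-1}}$ for left translation and using $A \circ \lambda_{w} = \lambda_{w} \circ A$, I would then verify $[Af](v) = [A(\lambda_{v^{-1}} f)](0) = \coupling{k}{f \circ \ltrans_{v} \circ \inversion} = [k \ast f](v)$ for every $f \in \Schwartz_{0}(G)$; that is, $A$ is convolution by $k$.

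The second task is to determine the homogeneity class of $k$. The homogeneity of $A^{\ast}$ in the definition of $\Hpsido^{m}$ is equivalent to $A \circ \dilation_{r}^{\ast} = r^{m} \dilation_{r}^{\ast} \circ A$, since the Haar measure scales by $r^{2n+2}$ under $\dilation_{r}$. Pairing the defining formula for $k$ against this scaling law and using that $\inversion$ commutes with $\dilation_{r}$, a short computation yields $\coupling{\dilation_{r}^{\ast} k}{h} = r^{-m-2n-2} \coupling{k}{h}$ for all $h \in \Schwartz_{0}(G)$. Since the annihilator of $\Schwartz_{0}(G)$ in $\Schwartz^{\prime}(G)$ is exactly the space of polynomials (a distribution killing all $f$ with $\Fourier(f)$ vanishing to infinite order at the origin is supported at the origin, hence is a polynomial), this means $k$ is homogeneous of degree $-m-2n-2$ modulo polynomials; tracking the $r$-dependence of the polynomial correction reproduces precisely the logarithmic term allowed in $\scrK_{-m-2n-2}$. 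Granting that $k$ is moreover smooth on $G \setminus \{0\}$, we conclude $k \in \scrK_{-m-2n-2}$.

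The final step is routine. Setting $g \coloneqq \Fourier(k)$, the isomorphism $\Fourier^{-1} \colon \scrG^{m} \xrightarrow{\cong} \scrK_{-m-2n-2}$ gives $g \in \scrG^{m}$, and $a \coloneqq g |_{\frakg^{\ast} \setminus \{0\}}$ lies in $\Hsymbol^{m}$. By the definition of $O^{0}$, we have $O^{0}(a) f = \Fourier^{-1}(g) \ast f = k \ast f = Af$ for all $f \in \Schwartz_{0}(G)$, so $A = O^{0}(a)$.

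The main obstacle is the smoothness of $k$ away from the origin: homogeneity modulo polynomials does not by itself force smoothness, since a homogeneous tempered distribution may be singular along a dilation orbit. I expect this to rest on the structure theory of homogeneous distributions on the Heisenberg group---in effect the converse to \cite{Christ-Geller-Glowacki-Polin1992}*{Proposition 2.2}, to the effect that a tempered distribution that is homogeneous modulo polynomials and whose convolution preserves $\Schwartz_{0}(G)$ necessarily belongs to $\scrK$---the remaining bookkeeping of polynomial and logarithmic corrections being comparatively harmless.
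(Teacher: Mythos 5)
Your proposal follows the same route as the paper: realize $A$ as a convolution operator with kernel $k \in \scrK_{-m-2n-2}$, set $a \coloneqq (\Fourier k)|_{\frakg^{\ast} \setminus \{0\}} \in \Hsymbol^{m}$, and conclude $A = O^{0}(a)$, with uniqueness coming from the injectivity of $O^{0}$ established in \cref{prop:well-defined-of-Hpsido}. The one step you flag as the main obstacle---that the kernel is smooth away from the origin and genuinely lies in $\scrK_{-m-2n-2}$---is precisely the content of \cite{Christ-Geller-Glowacki-Polin1992}*{Proposition 3.2}, which the paper invokes wholesale instead of re-deriving the Hahn--Banach and homogeneity bookkeeping as you do; with that citation supplied, your argument is complete.
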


\begin{proof}
	Let $A \in \Hpsido^{m}$.
	By~\cite{Christ-Geller-Glowacki-Polin1992}*{Proposition 3.2},
	we have $k \in \scrK_{-m-2n-2}$ such that $A f = k \ast f$ for any $f \in \Schwartz_{0}(G)$.
	If we define $a \in \Hsymbol^{m}$ by $a \coloneqq \Fourier (k) |_{\frakg^{\ast} \setminus \{0\}}$,
	then $O^{0}(a)$ coincides with $A$ by definition.
\end{proof}

\begin{definition}
	The \emph{Heisenberg symbol}
	\begin{equation}
		\sigma_{m}^{0} \colon \Hpsido^{m} \to \Hsymbol^{m}
	\end{equation}
	is defined by the inverse map of $O^{0}$.
\end{definition}

It follows from \cref{thm:adjoint-and-composition-of-model-operators} that
\begin{equation}
	\sigma_{m}^{0}(A^{\ast})
	= \overline{\sigma_{m}^{0}(A)},
	\qquad
	\sigma_{m_{1} + m_{2}}^{0}(A_{1} A_{2})
	= \sigma_{m_{1}}^{0}(A_{1}) \ast^{0} \sigma_{m_{2}}^{0}(A_{2})
\end{equation}
for $A \in \Hpsido^{m}$, $A_{1} \in \Hpsido^{m_{1}}$, and $A_{2} \in \Hpsido^{m_{2}}$.
In particular,
$A$ is formally self-adjoint if and only if $\sigma_{m}^{0}(A)$ is real-valued.

Before the end of this section,
we note a relation between the Reeb vector field and $\Hpsido^{m}$.

\begin{lemma}
\label{lem:Reeb-commutes-model-operator}
	The Reeb vector field $T^{0}$ commutes with any $A \in \Hpsido^{m}$.
\end{lemma}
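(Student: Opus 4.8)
The plan is to derive the commutation directly from the defining property of $\Hpsido^{m}$—that $A$ commutes with all left translations—together with the special role of $T^{0}$ as the central direction of $\frakg$. The point is that $T^{0}$ is not merely left-invariant (as the Reeb field of the left-invariant form $\theta^{0}$) but in fact \emph{bi-invariant}, since it spans the center of the Heisenberg Lie algebra, and bi-invariance is exactly what lets it pass through a left-invariant operator.

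First I would record that, by definition of $\Hpsido^{m}$, we have $A \circ \ltrans_{v}^{\ast} = \ltrans_{v}^{\ast} \circ A$ on $\Schwartz_{0}(G)$ for every $v \in G$, where $\ltrans_{v}^{\ast} f = f \circ \ltrans_{v}$. Specializing to the one-parameter subgroup $v = \exp(s X)$ for $X \in \frakg$ and differentiating at $s = 0$ converts commutation with the group element into commutation with an infinitesimal generator. Since $\tfrac{d}{ds}\big|_{s=0} \ltrans_{\exp(sX)}^{\ast}$ applied to $f$ is $\tfrac{d}{ds}\big|_{s=0} f(\exp(sX)\cdot v) = (X^{R} f)(v)$, where $X^{R}$ is the \emph{right}-invariant vector field with $X^{R}_{e} = X$, this step shows that $A$ commutes with every right-invariant vector field on $G$. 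Two routine points must be checked here: that $\ltrans_{v}^{\ast}$ preserves $\Schwartz_{0}(G)$, which follows from unimodularity of $G$ because the defining moment conditions are carried to moment conditions under translation, and that the difference quotients converge in the topology of $\Schwartz_{0}(G)$, so that the differentiation of the commutation relation is legitimate.

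The key observation is then that $T^{0}$ is itself right-invariant. Writing $v \cdot u = (t + s^{\prime} + 2 \Im(z \cdot \overline{w^{\prime}}), z + w^{\prime})$ for $v = (t, z)$ and $u = (s^{\prime}, w^{\prime})$, one sees that the first component of $v \cdot u$ has derivative $1$ in $t$ while the second component is independent of $t$; hence the differential of the right translation $R_{u}$ fixes $\partial / \partial t = T^{0}$, so $T^{0} = X^{R}$ with $X = T^{0}_{e}$. Combining this with the previous step yields $A \circ T^{0} = T^{0} \circ A$ on $\Schwartz_{0}(G)$, which is the claim.

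The only genuine obstacle is the justification of the infinitesimal passage from group-level to Lie-algebra-level commutation; once $A$ is known to commute with right-invariant vector fields, the centrality of $T^{0}$—equivalently the explicit computation above—makes the conclusion immediate. If one prefers to avoid the limiting argument altogether, one can instead invoke the representation $A f = \Fourier^{-1}(g) \ast f$ furnished by the preceding propositions and check directly that $T^{0}_{v}\bigl[f(v \cdot \inversion(u))\bigr] = (T^{0} f)(v \cdot \inversion(u))$, which again reduces to the right-invariance of $T^{0}$ and gives $T^{0}(\Fourier^{-1}(g) \ast f) = \Fourier^{-1}(g) \ast (T^{0} f)$ for all $f \in \Schwartz_{0}(G)$.
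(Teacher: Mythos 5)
Your proof is correct and is essentially the paper's argument: the paper likewise observes that the flow of $T^{0}$ is the one-parameter family of left translations $\ltrans_{(t,0)}$ (equivalently, that $T^{0}$ is the right-invariant --- indeed bi-invariant --- vector field attached to the central direction of $\frakg$) and concludes directly from the fact that $A$ commutes with left translation. Your write-up merely spells out the infinitesimal passage and the routine convergence checks that the paper leaves implicit.
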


\begin{proof}
	The vector field $T^{0}$ generates the flow $\ltrans_{(t, 0)}$.
	Since $A \in \Hpsido^{m}$ commutes with left translation,
	we have $\comm{T^{0}}{A} = 0$.
\end{proof}

\section{Heisenberg calculus}
\label{section:Heisenberg-calculus}

In this section,
we recall basic properties of Heisenberg pseudodifferential operators;
see~\cites{Beals-Greiner1988,Ponge2008-Book}
for a comprehensive introduction to the Heisenberg calculus.

Throughout this section,
we fix a closed pseudo-Hermitian manifold $(M, T^{1,0}M, \theta)$ of dimension $2n+1$.
Let
\begin{equation}
	\frakg M
	\coloneqq (TM / HM) \oplus HM.
\end{equation}
The Reeb vector field $T$ defines a nowhere-vanishing section $[T]$ of $TM / HM$.
For sections $X_{0}$ and $Y_{0}$ of $TM / HM$
and $X^{\prime}$ and $Y^{\prime}$ of $HM$,
the Lie bracket $\comm{X_{0} + X^{\prime}}{Y_{0} + Y^{\prime}}$ is defined by
\begin{equation}
	\comm{X_{0} + X^{\prime}}{Y_{0} + Y^{\prime}}
	\coloneqq - d \theta (X^{\prime}, Y^{\prime}) [T].
\end{equation}
This bracket makes $\frakg M$ a bundle of two-step nilpotent Lie algebras.
The dilation $\dilation_{r}$ on $\frakg M$ is defined by
\begin{equation}
	\dilation_{r} |_{TM / HM}
	\coloneqq r^{2},
	\qquad
	\dilation_{r} |_{HM}
	\coloneqq r.
\end{equation}
It follow from the definition of the Lie bracket that
$\dilation_{r}$ is a fiberwise Lie algebra isomorphism.
Set $G M \coloneqq \frakg M$ as a smooth fiber bundle
with the fiberwise group structure defined via the Baker-Campbell-Hausdorff formula.
The dilation $\dilation_{r}$ on $\frakg M$ induces that on $G M$,
which we write as $\dilation_{r}$ for abbreviation.

Take a local frame $(Z_{\alpha})$ of $T^{1,0}M$ on an open set $U \subset M$
such that
\begin{equation}
	\calL_{\theta}(Z_{\alpha}, Z_{\beta}) = 2 \tensor{\delta}{_{\alpha}_{\beta}}.
\end{equation}
Then the map
\begin{equation}
\label{eq:identification-with-trivial-Lie-alg-bundle}
	\frakg M |_{U} \to U \times \frakg;
	\qquad \pqty{ p, t [T] + 2 \Re \sum_{\alpha = 1}^{n} z^{\alpha} Z_{\alpha} }
		\mapsto (p, t, z)
\end{equation}
gives an isomorphism between fiber bundles of Lie algebras.
This isomorphism is compatible with the dilation.
The identification \cref{eq:identification-with-trivial-Lie-alg-bundle} induces those on $G M$
and the dual bundle $\frakg^{\ast} M \coloneqq (\frakg M)^{\ast}$ of $\frakg M$:
\begin{equation}
\label{eq:identification-with-trivial-Lie-grp-bundle}
	G M |_{U} \to U \times G,
	\qquad
	\frakg^{\ast} M |_{U} \to U \times \frakg^{\ast}.
\end{equation}
These are also compatible with the dilation.
Let $(Z_{\alpha}^{\prime})$ be another local frame of $T^{1,0}M$ on $U$ satisfying
$\calL_{\theta}(Z_{\alpha}^{\prime}, Z_{\beta}^{\prime}) = 2 \tensor{\delta}{_{\alpha}_{\beta}}$.
This gives another identification $\frakg M |_{U} \to U \times \frakg$.
These two identifications relate with each other
by a smooth family $(U(p))_{p \in U}$ of unitary matrices;
that is,
\begin{equation}
	U \times \frakg \to U \times \frakg;
	\qquad
	(p, t, z)
	\mapsto (p, t, U(p) \cdot z).
\end{equation}
The same is true for $G M$ and $\frakg^{\ast} M$.

For $m \in \mathbb{R}$,
the space $\Hsymbol^{m}(M)$
consists of functions in $C^{\infty}(\frakg^{*} M \setminus \{0\})$
that are homogeneous of degree $m$ on each fiber.
Under the identification \cref{eq:identification-with-trivial-Lie-grp-bundle},
the fiberwise product $\ast^{0}$ induces a well-defined bilinear product
\begin{equation}
	\ast \colon \Hsymbol^{m_{1}}(M) \times \Hsymbol^{m_{2}}(M)
	\to \Hsymbol^{m_{1} + m_{2}}(M).
\end{equation}

Now we consider Heisenberg pseudodifferential operators.
For $m \in \mathbb{R}$,
denote by $\Hpsido^{m}(M)$
the space of \emph{Heisenberg pseudodifferential operators
$A \colon C^{\infty}(M) \to C^{\infty}(M)$ of order $m$}.
This space is closed under complex conjugate, transpose, and formal adjoint~\cite{Ponge2008-Book}*{Proposition 3.1.23}.
In particular,
any $A \in \Hpsido^{m}$ extends to a linear operator
\begin{equation}
	A \colon \scrD^{\prime}(M) \to \scrD^{\prime}(M),
\end{equation}
where $\scrD^{\prime}(M)$ is the space of distributions on $M$.
For example,
$V \in \Gamma(H M)$ is an element of $\Hpsido^{1}(M)$
and $T \in \Hpsido^{2}(M)$.
Note that $\Hpsido^{-\infty}(M) \coloneqq \bigcap_{m \in \bbZ} \Hpsido^{m}(M)$
coincides with the space of smoothing operators on $M$.
As in the usual pseudodifferential calculus,
there exists the \emph{Heisenberg principal symbol}
\begin{equation}
	\sigma_{m} \colon \Hpsido^{m}(M) \to \Hsymbol^{m}(M),
\end{equation}
which has the following properties:

\begin{proposition}[\cite{Ponge2008-Book}*{Propositions 3.2.6 and 3.2.9}]
\label{prop:Heisenberg-principal-symbol}
	(i) The Heisenberg principal symbol $\sigma_{m}$ gives the following exact sequence:
	\begin{equation}
		0 \to \Hpsido^{m-1}(M) \to \Hpsido^{m}(M) \xrightarrow{\sigma_{m}} \Hsymbol^{m}(M) \to 0.
	\end{equation}

	(ii) For $A_{1} \in \Hpsido^{m_{1}}(M)$ and $A_{2} \in \Hpsido^{m_{2}}(M)$,
	the operator $A_{1} A_{2}$ is a Heisenberg pseudodifferential operator of order $m_{1} + m_{2}$,
	and
	\begin{equation}
		\sigma_{m_{1} + m_{2}}(A_{1} A_{2}) = \sigma_{m_{1}}(A_{1}) \ast \sigma_{m_{2}}(A_{2}).
	\end{equation}
\end{proposition}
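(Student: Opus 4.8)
The plan is to reduce everything to the model operators on the osculating fibers $G_{p} M \cong G$ analysed in \cref{section:model-operators-on-the-Heisenberg-group}, exploiting the fact that a Heisenberg pseudodifferential operator is, to leading order, a field of left-invariant convolution operators on these groups. Working in a local frame $(Z_{\alpha})$ as in \cref{eq:identification-with-trivial-Lie-alg-bundle}, an operator $A \in \Hpsido^{m}(M)$ is represented by a symbol lying in a Hörmander-type class adapted to the parabolic dilation $\dilation_{r}$, with an asymptotic expansion $p \sim \sum_{j \geq 0} p_{m-j}$ whose leading term $p_{m}(x, \cdot)$ is homogeneous of degree $m$ on each fiber of $\frakg^{\ast} M$. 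The principal symbol $\sigma_{m}(A)$ is by definition this leading term, read invariantly as an element of $\Hsymbol^{m}(M)$; the point here is that, under a change of frame by the unitary family $(U(p))$, the term $p_{m}$ transforms so that $\sigma_{m}(A)$ is globally well-defined. At each $p$ the fiber symbol $\sigma_{m}(A)(p, \cdot) \in \Hsymbol^{m}$ corresponds, under $O^{0}$, to the model operator $O^{0}(\sigma_{m}(A)(p, \cdot)) \in \Hpsido^{m}$ on $G_{p} M$, and this is the bridge to \cref{section:model-operators-on-the-Heisenberg-group}.

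For the exact sequence in (i) I would argue as follows. The inclusion $\Hpsido^{m-1}(M) \subset \Ker \sigma_{m}$ is immediate, since an operator of order $m-1$ has no homogeneous part of degree $m$. Surjectivity: given $a \in \Hsymbol^{m}(M)$, cover $M$ by charts trivialising $\frakg^{\ast} M$, quantize $a|_{U}$ locally to an operator with leading symbol $a|_{U}$, and patch with a partition of unity; the chart overlaps and the tails of the expansions contribute only elements of $\Hpsido^{m-1}(M)$, so the result has principal symbol $a$. Exactness in the middle: if $\sigma_{m}(A) = 0$, then $p_{m} \equiv 0$ in every chart, so locally $p \sim \sum_{j \geq 1} p_{m-j}$ lies in the class of order $m-1$; since this holds in every chart and the classes are coordinate-invariant modulo $\Hpsido^{-\infty}(M)$, we conclude $A \in \Hpsido^{m-1}(M)$.

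For the multiplicativity in (ii), the essential input is the model-operator analysis. That $A_{1} A_{2} \in \Hpsido^{m_{1}+m_{2}}(M)$ is part of the stability of the symbol classes under composition. To identify its principal symbol I would freeze the base point: at each $p$, the osculating structure identifies the leading behaviour of $A_{i}$ with $O^{0}(\sigma_{m_{i}}(A_{i})(p, \cdot))$, and composition of the operators becomes, to leading order, composition of these model operators. By \cref{thm:adjoint-and-composition-of-model-operators}(ii),
\begin{equation}
	O^{0}\big(\sigma_{m_{1}}(A_{1})(p, \cdot)\big) \, O^{0}\big(\sigma_{m_{2}}(A_{2})(p, \cdot)\big)
	= O^{0}\big(\sigma_{m_{1}}(A_{1})(p, \cdot) \ast^{0} \sigma_{m_{2}}(A_{2})(p, \cdot)\big),
\end{equation}
which says exactly that $\sigma_{m_{1}+m_{2}}(A_{1} A_{2})(p, \cdot) = (\sigma_{m_{1}}(A_{1}) \ast \sigma_{m_{2}}(A_{2}))(p, \cdot)$, since $\ast$ is defined fibrewise from $\ast^{0}$. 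As $p$ is arbitrary, this is the asserted formula.

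The main obstacle is precisely this composition step: justifying that, modulo $\Hpsido^{m_{1}+m_{2}-1}(M)$, the composition of the operators reduces to the composition of the frozen model operators. The difficulty is that the osculating group structure $G_{p} M$ varies with $p$, so passing to local convolution form introduces error terms both from the $p$-dependence of the coordinate identifications \cref{eq:identification-with-trivial-Lie-grp-bundle} and from the Baker--Campbell--Hausdorff corrections to the group law. One must check that all such corrections drop the parabolic order by at least one and hence leave the principal symbol unchanged. This order-counting, governed by the homogeneity under $\dilation_{r}$, is the heart of the Heisenberg composition theorem; it is the bookkeeping carried out in \cite{Ponge2008-Book}, even though the leading-order statement is the clean algebraic identity displayed above.
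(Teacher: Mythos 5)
This proposition is imported verbatim from Ponge's monograph (Propositions 3.2.6 and 3.2.9); the paper offers no proof of its own, only the citation, so there is no in-paper argument to compare against. Your sketch follows the standard route by which the result is established in that reference: define $\sigma_{m}$ as the leading parabolic-homogeneous term of the local symbol expansion, check frame-invariance via the unitary family $(U(p))$, get exactness of the sequence from the definition of the filtration, and obtain multiplicativity by freezing the base point and invoking the fiberwise product $\ast^{0}$ from \cref{thm:adjoint-and-composition-of-model-operators}. At the level of strategy this is all correct and is consistent with how the cited propositions are actually proved.

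The one substantive issue is that your argument is not a proof but a reduction to the very statement being proved. The entire nontrivial content of part (ii) is the assertion that $A_{1}A_{2}$ again lies in $\Hpsido^{m_{1}+m_{2}}(M)$ and that, modulo $\Hpsido^{m_{1}+m_{2}-1}(M)$, the composition is computed by composing the frozen model operators. You name this as ``the main obstacle'' and then defer it to Ponge's bookkeeping; but that bookkeeping \emph{is} Proposition 3.2.9. The difficulty is genuine: because the product $\ast^{0}$ is noncommutative and the osculating group varies with $p$, the error terms coming from Taylor-expanding the kernel in privileged coordinates and from the Baker--Campbell--Hausdorff corrections must be shown to lie one full step down in the parabolic filtration, and this is not a formal consequence of homogeneity alone --- it requires the precise definition of the kernel classes and an honest estimate of the remainders. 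A similar (milder) remark applies to surjectivity in (i), where the patching argument implicitly uses asymptotic completeness of the symbol classes and coordinate-invariance of $\Hpsido^{m}(M)$, both of which are themselves theorems in the Heisenberg calculus. So the proposal is an accurate road map, but as written it establishes nothing beyond what the citation already supplies.
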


On the other hand,
there exists a crucial difference between the usual pseudodifferential calculus and the Heisenberg one.
Since the product $\ast$ is non-commutative,
the commutator $\comm{A_{1}}{A_{2}}$ of $A_{1} \in \Hpsido^{m_{1}}(M)$ and $A_{2} \in \Hpsido^{m_{2}}(M)$
is not an element of $\Hpsido^{m_{1} + m_{2} - 1}(M)$ in general.
However,
we have the following

\begin{lemma}
\label{lem:commutator-for-Reeb-vector-field}
	Let $A \in \Hpsido^{m}(M)$.
	Then $\comm{T}{A} \in \Hpsido^{m+1}(M)$.
\end{lemma}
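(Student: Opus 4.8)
The plan is to prove that the Heisenberg principal symbol of $\comm{T}{A}$ in degree $m+2$ vanishes; the assertion $\comm{T}{A} \in \Hpsido^{m+1}(M)$ then follows immediately from the exactness in \cref{prop:Heisenberg-principal-symbol}(i). First I would record that $T \in \Hpsido^{2}(M)$ and $A \in \Hpsido^{m}(M)$, so \cref{prop:Heisenberg-principal-symbol}(ii) gives $T A, A T \in \Hpsido^{m+2}(M)$, whence $\comm{T}{A} \in \Hpsido^{m+2}(M)$ with
\begin{equation}
	\sigma_{m+2}(\comm{T}{A})
	= \sigma_{2}(T) \ast \sigma_{m}(A) - \sigma_{m}(A) \ast \sigma_{2}(T).
\end{equation}
It therefore suffices to show that $\sigma_{2}(T)$ is $\ast$-central against $\sigma_{m}(A)$, so that the two terms coincide.

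Next I would argue fiberwise. Fix $p \in M$ and trivialize $G M$ and $\frakg^{\ast} M$ near $p$ via \cref{eq:identification-with-trivial-Lie-grp-bundle}, so that the fiberwise product $\ast$ restricts to $\ast^{0}$ and the Heisenberg symbols $\sigma_{2}(T)(p, \cdot) \in \Hsymbol^{2}$ and $\sigma_{m}(A)(p, \cdot) \in \Hsymbol^{m}$ act as model operators on $\Schwartz_{0}(G)$. The key point is that, under the identification \cref{eq:identification-with-trivial-Lie-alg-bundle}, the Reeb direction $[T]$ is sent to $T^{0}$, so the model operator attached to $T$ at $p$ is exactly $T^{0}$; that is, $O^{0}(\sigma_{2}(T)(p, \cdot)) = T^{0}$. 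Applying \cref{lem:Reeb-commutes-model-operator}, $T^{0}$ commutes with $O^{0}(\sigma_{m}(A)(p, \cdot))$, and using \cref{thm:adjoint-and-composition-of-model-operators}(ii) together with the injectivity of $O^{0}$ I would conclude
\begin{equation}
	\sigma_{2}(T)(p, \cdot) \ast^{0} \sigma_{m}(A)(p, \cdot)
	= \sigma_{m}(A)(p, \cdot) \ast^{0} \sigma_{2}(T)(p, \cdot).
\end{equation}
Since $p$ was arbitrary, the two terms in the previous displayed principal symbol coincide, so $\sigma_{m+2}(\comm{T}{A}) = 0$, and \cref{prop:Heisenberg-principal-symbol}(i) finishes the proof.

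The step I expect to be the main obstacle is the identification $O^{0}(\sigma_{2}(T)(p, \cdot)) = T^{0}$: one must match the intrinsic definition of the Heisenberg principal symbol with the model operator $T^{0}$ on the osculating group at $p$, checking that the chosen trivialization carries $T$ to the Reeb field of $G$ compatibly with the dilation $\dilation_{r}$. Once this is in hand, everything else is a formal consequence of the multiplicativity of $\sigma$, the injectivity of $O^{0}$, and \cref{lem:Reeb-commutes-model-operator}; in particular no analysis beyond the symbol calculus is needed, which is precisely why the commutator gains a full order rather than the naive two.
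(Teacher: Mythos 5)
Your proposal is correct and follows essentially the same route as the paper: reduce to the vanishing of $\sigma_{m+2}(\comm{T}{A})$ via the multiplicativity and exactness of the Heisenberg principal symbol, identify $\sigma_{2}(T)$ fiberwise with $\sigma_{2}^{0}(T^{0})$ under the trivialization \cref{eq:identification-with-trivial-Lie-grp-bundle}, and invoke \cref{lem:Reeb-commutes-model-operator} together with the injectivity of $O^{0}$. The one step you flag as the main obstacle, namely $O^{0}(\sigma_{2}(T)(p,\cdot)) = T^{0}$, is exactly what the paper disposes of by citing the computation $\sigma_{2}(T)(p,\tau,\zeta) = \sqrt{-1}\,\tau = \sigma_{2}^{0}(T^{0})(\tau,\zeta)$ from \cite{Ponge2008-Book}*{Example 3.25}.
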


\begin{proof}
	It is enough to show that $\sigma_{m+2}(\comm{T}{A}) = 0$,
	or equivalently,
	\begin{equation}
		\sigma_{2}(T) \ast \sigma_{m}(A) = \sigma_{m}(A) \ast \sigma_{2}(T).
	\end{equation}
	Fix an identification \cref{eq:identification-with-trivial-Lie-grp-bundle}.
	Then $\sigma_{2}(T) \in \Hsymbol^{2}(M)$ is given by
	\begin{equation}
		\sigma_{2}(T)(p, \tau, \zeta)
		= \sqrt{-1} \tau
		= \sigma_{2}^{0}(T^{0})(\tau, \zeta);
	\end{equation}
	see~\cite{Ponge2008-Book}*{Example 3.2.5}.
	Hence it suffices to prove that
	$\sigma_{2}^{0}(T^{0}) \ast^{0} a = a \ast^{0} \sigma_{2}^{0}(T^{0})$
	holds for any $a \in \Hsymbol^{m}$. 
	From \cref{lem:Reeb-commutes-model-operator},
	we obtain
	\begin{equation}
		O^{0}(\sigma_{2}^{0}(T^{0}) \ast^{0} a)
		= T^{0} O^{0}(a)
		= O^{0}(a) T^{0}
		= O^{0}(a \ast^{0} \sigma_{2}^{0}(T^{0})),
	\end{equation}
	which is equivalent to
	$\sigma_{2}^{0}(T^{0}) \ast^{0} a = a \ast^{0} \sigma_{2}^{0}(T^{0})$.
\end{proof}

Next,
consider approximate inverses of Heisenberg pseudodifferential operators.
We write $A \sim B$ if $A - B$ is a smoothing operator.

\begin{definition}
	Let $A \in \Hpsido^{m}(M)$.
	An operator $B \in \Hpsido^{-m}(M)$ is called a \emph{parametrix} of $A$
	if $A B \sim I$ and $B A \sim I$.
\end{definition}

The existence of a parametrix of a Heisenberg pseudodifferential operator
is determined only by its Heisenberg principal symbol.

\begin{proposition}[\cite{Ponge2008-Book}*{Proposition 3.3.1}]
\label{prop:equivalent-conditions-for-existence-of-parametrix}
	Let $A \in \Hpsido^{m}(M)$
	with Heisenberg principal symbol $a \in \Hsymbol^{m}(M)$.
	Then $A$ has a parametrix if and only if
	there exists $b \in \Hsymbol^{-m}(M)$ such that
	$a \ast b = b \ast a = 1$.
\end{proposition}

Now consider the Heisenberg differential operator $\Delta_{b} + 1$ of order $2$.
It is known that this operator has a parametrix;
see the proof of~\cite{Ponge2008-Book}*{Proposition 3.5.7} for example.
Since $\Delta_{b} + 1$ is positive and self-adjoint,
the $s$-th power $(\Delta_{b} + 1)^{s}$ of $\Delta_{b} + 1$,
$s \in \mathbb{R}$,
is a Heisenberg pseudodifferential operator of order $2 s$~\cite{Ponge2008-Book}*{Theorems 5.3.1 and 5.4.10}.
Using this operator,
we define
\begin{equation}
	\HSobolev^{s}(M)
	:= \Set{ u \in \scrD^{\prime}(M) \mid (\Delta_{b} + 1)^{s/2} u \in L^{2}(M) }.
\end{equation}
This space is a Hilbert space with the inner product
\begin{equation}
	\iproduct{u}{v}_{s}
	= \iproduct{(\Delta_{b} + 1)^{s/2} u}{(\Delta_{b} + 1)^{s/2} v}_{L^{2}(M)};
\end{equation}
write $\norm{\cdot}_{s}$ for the norm determined by $\iproduct{\cdot}{\cdot}_{s}$.
The space $C^{\infty}(M)$ is dense in $\HSobolev^{s}(M)$,
and $C^{\infty}(M) = \bigcap_{s \in \bbR} \HSobolev^{s}(M)$~\cite{Ponge2008-Book}*{Proposition 5.5.3}.
Note that,
for $k \in \mathbb{N}$,
the Hilbert space $\HSobolev^{k}(M)$ coincides with the Folland-Stein space $S^{k, 2}(M)$
as a topological vector space~\cite{Ponge2008-Book}*{Proposition 5.5.5}.
Similar to the usual $L^{2}$-Sobolev space theory,
we obtain the following

\begin{lemma}
\label{lem:Rellich's-lemma}
	For $s_{1} < s_{2}$,
	the embedding $\HSobolev^{s_{2}}(M) \hookrightarrow \HSobolev^{s_{1}}(M)$ is compact.
\end{lemma}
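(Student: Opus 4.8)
The plan is to identify the embedding with a negative-order Heisenberg pseudodifferential operator on $L^{2}(M)$ and then to read off its compactness from the spectral resolution of $\Delta_{b}+1$. Since $(\Delta_{b}+1)^{s/2}\in\Hpsido^{s}(M)$ restricts, by the very definition of $\norm{\cdot}_{s}$, to an isometric isomorphism $\HSobolev^{s}(M)\to L^{2}(M)$ for every $s$, the inclusion $\iota\colon\HSobolev^{s_{2}}(M)\hookrightarrow\HSobolev^{s_{1}}(M)$ is conjugate, via these isometries, to the operator $(\Delta_{b}+1)^{s_{1}/2}\,\iota\,(\Delta_{b}+1)^{-s_{2}/2}=(\Delta_{b}+1)^{(s_{1}-s_{2})/2}$ acting on $L^{2}(M)$. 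This is an element of $\Hpsido^{s_{1}-s_{2}}(M)$ of strictly negative order $s_{1}-s_{2}<0$, so it suffices to prove that a Heisenberg pseudodifferential operator of negative order is compact on $L^{2}(M)$.

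First I would establish that $\Delta_{b}+1$ has compact resolvent. Directly from the definitions, $(\Delta_{b}+1)^{-1}$ is an isometry $L^{2}(M)=\HSobolev^{0}(M)\to\HSobolev^{2}(M)$, and by Ponge's identification $\HSobolev^{2}(M)$ coincides with the Folland--Stein space $S^{2,2}(M)$, which embeds continuously into the classical Sobolev space $H^{1}(M)$. As $M$ is closed, the classical Rellich lemma makes $H^{1}(M)\hookrightarrow L^{2}(M)$ compact, so $(\Delta_{b}+1)^{-1}$ is compact on $L^{2}(M)$. Being positive, self-adjoint, and having compact resolvent, $\Delta_{b}+1$ then has discrete spectrum $1\le\lambda_{0}\le\lambda_{1}\le\cdots\to\infty$ with an $L^{2}$-orthonormal eigenbasis $(\phi_{j})$.

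With this spectral picture I would characterize the Heisenberg--Sobolev norms in the eigenbasis: writing $u=\sum_{j}c_{j}\phi_{j}$, one has $\norm{u}_{s}^{2}=\sum_{j}\lambda_{j}^{s}|c_{j}|^{2}$ because $(\Delta_{b}+1)^{s/2}\phi_{j}=\lambda_{j}^{s/2}\phi_{j}$. Compactness of $\HSobolev^{s_{2}}(M)\hookrightarrow\HSobolev^{s_{1}}(M)$ then follows from the standard diagonal argument for weighted $\ell^{2}$ spaces: from a bounded sequence in $\HSobolev^{s_{2}}(M)$ extract a weakly convergent subsequence, and use $\lambda_{j}^{s_{1}-s_{2}}\to0$ as $j\to\infty$ to upgrade weak convergence to strong convergence in $\HSobolev^{s_{1}}(M)$ by splitting the sum at a large index and estimating the tail uniformly.

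The main obstacle is the compactness of the resolvent itself, equivalently the compactness of a negative-order operator on $L^{2}(M)$; the remainder is formal functional analysis. The delicate point is the bridge between the Heisenberg--Sobolev scale and the classical one, namely the continuous inclusion $S^{2,2}(M)\hookrightarrow H^{1}(M)$ that trades one transverse $T$-derivative for two horizontal ones. Should one prefer to remain entirely within the Heisenberg calculus, the alternative is to show directly that operators of order less than $-(2n+2)$ have continuous Schwartz kernels on the compact product $M\times M$, hence are Hilbert--Schmidt and compact, and then to reach the order $-2$ of the resolvent by interpolating the analytic family $(\Delta_{b}+1)^{z}$ between a bounded endpoint and a compact one; this interpolation of compactness across the intermediate range of orders is the technical heart of that second route.
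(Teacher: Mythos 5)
Your proof is correct, but it is organized differently from the paper's. The paper reduces to $s_{1}=0$ via the isometry $(\Delta_{b}+1)^{s_{1}/2}$ and then simply factors the embedding $\HSobolev^{s_{2}}(M)\hookrightarrow L^{2}(M)$ through the classical Sobolev space $H^{s_{2}/2}(M)$ (Ponge, Proposition 5.5.7), so that the classical Rellich lemma finishes the proof in two lines. You instead conjugate the embedding all the way down to the single operator $(\Delta_{b}+1)^{(s_{1}-s_{2})/2}$ on $L^{2}(M)$, prove compactness of the resolvent by invoking the comparison with the classical scale only at the one order $s=2$ (via $\HSobolev^{2}=S^{2,2}\hookrightarrow H^{1}$), and then run the spectral theorem plus a weighted-$\ell^{2}$ diagonal argument. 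Both proofs rest on the same essential input --- the continuous embedding of the Heisenberg--Sobolev scale into the classical one at half the order, plus classical Rellich on the closed manifold $M$ --- but yours needs it only at a single integer order, where the Folland--Stein identification is most elementary, and in exchange delivers more: the discreteness of $\sigma(\Delta_{b}+1)$ and the explicit description $\norm{u}_{s}^{2}=\sum_{j}\lambda_{j}^{s}\abs{c_{j}}^{2}$ of every $\HSobolev^{s}$-norm. (Strictly speaking, once you know $\lambda_{j}\to\infty$, the compactness of $(\Delta_{b}+1)^{(s_{1}-s_{2})/2}$ is immediate from the spectral theorem, so the final diagonal argument is redundant.) One caution on phrasing: your reduction announces that ``it suffices to prove that a Heisenberg pseudodifferential operator of negative order is compact on $L^{2}(M)$,'' but that general statement is essentially equivalent to the lemma itself (it is how the paper deduces the second half of its Proposition on mapping properties), and you neither prove nor need it --- you only need compactness of the specific operator $(\Delta_{b}+1)^{(s_{1}-s_{2})/2}$, which the spectral theorem gives directly; I would restate the reduction accordingly to avoid any appearance of circularity.
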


\begin{proof}
	The operator $(\Delta_{b} + 1)^{s^{\prime}/2}$, $s^{\prime} \in \mathbb{R}$,
	gives an isometry $\HSobolev^{s + s^{\prime}}(M) \to \HSobolev^{s}(M)$,
	and so we may assume that $s_{1} = 0$.
	From~\cite{Ponge2008-Book}*{Proposition 5.5.7},
	we derive that
	the embedding $\HSobolev^{s_{2}}(M) \hookrightarrow \HSobolev^{0}(M) = L^{2}(M)$
	is the composition of the two embeddings
	$\HSobolev^{s_{2}}(M) \hookrightarrow H^{s_{2} / 2}(M)$ and $H^{s_{2} / 2}(M) \hookrightarrow L^{2}(M)$,
	where $H^{s}(M)$ is the usual $L^{2}$-Sobolev space on $M$ of order $s$.
	Thus the compactness of $\HSobolev^{s_{2}}(M) \hookrightarrow L^{2}(M)$
	follows from Rellich's lemma.
\end{proof}

Heisenberg pseudodifferential operators act on these Hilbert spaces as follows:

\begin{proposition}
\label{prop:mapping-properties-of-Hpsido}
	Any $A \in \Hpsido^{m}(M)$
	extends to a continuous linear operator
	\begin{equation}
		A \colon \HSobolev^{s+m}(M) \to \HSobolev^{s}(M)
	\end{equation}
	for every $s \in \bbR$.
	In particular if $m < 0$,
	the operator $A \colon L^{2}(M) \to L^{2}(M)$ is compact.
\end{proposition}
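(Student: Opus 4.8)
The plan is to reduce the statement, for general order $m$ and general Sobolev index $s$, to a single base case---the $L^{2}$-boundedness of order-zero Heisenberg pseudodifferential operators---and then to read off the compactness assertion from \cref{lem:Rellich's-lemma}. Throughout I would use that the powers $(\Delta_{b}+1)^{t}$ are Heisenberg pseudodifferential operators of order $2t$, that $\Hpsido^{m}(M)$ is closed under composition (\cref{prop:Heisenberg-principal-symbol}(ii)), that $(\Delta_{b}+1)^{t/2}$ furnishes an isometric isomorphism $\HSobolev^{s}(M) \to \HSobolev^{s-t}(M)$ (as already used in the proof of \cref{lem:Rellich's-lemma}), and that $C^{\infty}(M)$ is dense in every $\HSobolev^{s}(M)$, so that it suffices to prove the norm estimate on the dense subspace $C^{\infty}(M)$ and then extend by continuity.

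For the boundedness, fix $A \in \Hpsido^{m}(M)$ and $s \in \bbR$, and form the conjugated operator
\[
	B \coloneqq (\Delta_{b}+1)^{s/2} \circ A \circ (\Delta_{b}+1)^{-(s+m)/2}.
\]
Being a composition of Heisenberg pseudodifferential operators of orders $s$, $m$, and $-(s+m)$, it follows from \cref{prop:Heisenberg-principal-symbol}(ii) that $B \in \Hpsido^{0}(M)$. Granting the base case that $B$ is bounded on $L^{2}(M)$, for every $u \in C^{\infty}(M)$ I would compute
\[
	\norm{A u}_{s}
	= \norm{(\Delta_{b}+1)^{s/2} A u}_{L^{2}(M)}
	= \norm{B (\Delta_{b}+1)^{(s+m)/2} u}_{L^{2}(M)}
	\leq \norm{B}_{L^{2} \to L^{2}} \, \norm{u}_{s+m},
\]
using the definition of the $\HSobolev^{s}$-norm in the first equality, the identity $A = (\Delta_{b}+1)^{-s/2} B (\Delta_{b}+1)^{(s+m)/2}$ in the second, and the isometry $\norm{(\Delta_{b}+1)^{(s+m)/2} u}_{L^{2}(M)} = \norm{u}_{s+m}$ in the last. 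Density of $C^{\infty}(M)$ then yields the continuous extension $A \colon \HSobolev^{s+m}(M) \to \HSobolev^{s}(M)$. The compactness assertion is then a formal consequence: if $m < 0$, taking $s = -m > 0$ gives a bounded operator $A \colon L^{2}(M) = \HSobolev^{0}(M) \to \HSobolev^{-m}(M)$, and composing with the embedding $\HSobolev^{-m}(M) \hookrightarrow \HSobolev^{0}(M) = L^{2}(M)$, which is compact by \cref{lem:Rellich's-lemma} since $0 < -m$, exhibits $A \colon L^{2}(M) \to L^{2}(M)$ as the composition of a bounded operator with a compact one, hence compact.

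The hard part is the base case: the $L^{2}$-boundedness of an arbitrary $A \in \Hpsido^{0}(M)$. The reduction above is purely algebraic, whereas this input is genuinely analytic, and one route is simply to quote the boundedness theorem of the Heisenberg calculus. To argue it directly I would use the H\"ormander square-root trick. Writing $a \coloneqq \sigma_{0}(A) \in \Hsymbol^{0}(M)$, so that $\sigma_{0}(A^{\ast} A) = \overline{a} \ast a$, I would fix a constant $C > 0$ exceeding the fiberwise operator norms of the model convolution operators $O^{0}(a(p, \cdot))$ and seek $B \in \Hpsido^{0}(M)$ together with a smoothing operator $R$ satisfying
\[
	C^{2} I - A^{\ast} A = B^{\ast} B + R.
\]
Since $R$ maps $L^{2}(M)$ into $C^{\infty}(M)$ and is in particular $L^{2}$-bounded, pairing this identity with $u$ would give
\[
	\norm{A u}_{L^{2}(M)}^{2}
	= C^{2} \norm{u}_{L^{2}(M)}^{2} - \norm{B u}_{L^{2}(M)}^{2} - \iproduct{R u}{u}_{L^{2}(M)}
	\leq (C^{2} + \norm{R}) \norm{u}_{L^{2}(M)}^{2}.
\]
The genuine obstacle is the construction of the approximate square root $B$, that is, solving $\overline{b} \ast b = C^{2} - \overline{a} \ast a$ modulo lower-order symbols: because the product $\ast$ is non-commutative (\cref{thm:adjoint-and-composition-of-model-operators}), one cannot extract $b$ pointwise as in the elliptic theory but must instead build it through the fiberwise functional calculus of the nonnegative model operators, which is where the real content of the theorem lies.
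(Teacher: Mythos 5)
Your argument is correct and takes essentially the same route as the paper: the paper simply cites \cite{Ponge2008-Book}*{Proposition 5.5.8} for the Sobolev mapping property and deduces compactness from \cref{lem:Rellich's-lemma} exactly as you do. Your conjugation by $(\Delta_{b}+1)^{t/2}$ is the standard reduction hiding inside that citation, and you correctly isolate the one genuinely analytic input --- the $L^{2}$-boundedness of order-zero Heisenberg operators --- which must be quoted (your square-root sketch stops short of actually constructing $b$, but that is consistent with the paper's own treatment, which quotes the result outright).
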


\begin{proof}
	The former statement follows from~\cite{Ponge2008-Book}*{Propositions 5.5.8}.
	The latter one is a consequence of the former one and \cref{lem:Rellich's-lemma}.
\end{proof}

\section{Proofs of the main results}
\label{section:proofs-of-the-main-results}

In this section,
we prove the main results in this paper.
In what follows,
we fix a closed embeddable pseudo-Hermitian manifold  $(M, T^{1, 0} M, \theta)$ of dimension $2n+1$.

For $\mu \in \bbR$,
we define a formally self-adjoint Heisenberg differential operator $L_{\mu}$ of order $2$ by
\begin{equation}
	L_{\mu}
	\coloneqq \frac{1}{2} \Delta_{b} + \frac{\sqrt{-1}}{2} \mu T.
\end{equation}
It is known that
$L_{\mu}$ has a parametrix $N_{\mu} \in \Hpsido^{-2}(M)$
if and only if $\mu \notin \pm (n + 2 \bbN)$;
see the proof of~\cite{Ponge2008-Book}*{Proposition 3.5.7} for example.
On the other hand,
the embeddability of $M$ implies that
there exist the partial inverse $N_{n} \in \Hpsido^{-2}(M)$ of $L_{n} = \Box_{b}$
and the orthogonal projection $S \in \Hpsido^{0}(M)$ to $\Ker \Box_{b}$,
called the \emph{\Szego projection}~\cite{Beals-Greiner1988}*{Theorem 24.20 and Corollary 25.67}.
Note that $\sigma_{0}(S) \neq 0$;
see~\cite{Ponge2008}*{Section 5} for example.
Taking the complex conjugate gives
the partial inverse $N_{-n} \in \Hpsido^{-2}(M)$ of $L_{-n} = \overline{\Box}_{b}$
and the orthogonal projection $\ovS \in \Hpsido^{0}(M)$ to $\Ker \overline{\Box}_{b}$.

\begin{lemma}
\label{lem:commutator-of-Szego-projection}
	For any $\mu \in \bbR$,
	one has $\comm{L_{\mu}}{S} \in \Hpsido^{1}(M)$.
\end{lemma}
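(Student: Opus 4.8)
The plan is to reduce the statement to two commutators that are already under control, using the fact that $L_{\mu}$ differs from the Kohn Laplacian only by a multiple of the Reeb vector field. Since $\Box_{b} = L_{n} = \tfrac{1}{2}\Delta_{b} + \tfrac{\sqrt{-1}}{2} n T$, I would first write
\begin{equation}
	L_{\mu} = \Box_{b} + \frac{\sqrt{-1}}{2}(\mu - n) T,
\end{equation}
so that
\begin{equation}
	\comm{L_{\mu}}{S} = \comm{\Box_{b}}{S} + \frac{\sqrt{-1}}{2}(\mu - n)\comm{T}{S}.
\end{equation}
It then suffices to treat the two terms on the right separately.

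For the first term, I would use that $S$ is the \emph{orthogonal} projection onto $\Ker \Box_{b}$ (which exists as an element of $\Hpsido^{0}(M)$ precisely because embeddability makes $\Box_{b}$ have closed range, as recorded in the setup above). On the one hand $\Box_{b} S = 0$, because the range of $S$ is $\Ker \Box_{b}$. On the other hand, for $u$ in the domain of $\Box_{b}$ and $v \in \Ker \Box_{b}$, self-adjointness of the Gaffney extension gives $\iproduct{\Box_{b} u}{v}_{L^{2}} = \iproduct{u}{\Box_{b} v}_{L^{2}} = 0$, so $\Box_{b} u \perp \Ker \Box_{b}$ and hence $S \Box_{b} = 0$ as well. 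Therefore $\comm{\Box_{b}}{S} = 0$, which in particular lies in $\Hpsido^{1}(M)$. These are genuine operator identities, not identities modulo smoothing, and this is the only place where the projection property of $S$ is really used.

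For the second term, $S \in \Hpsido^{0}(M)$, so \cref{lem:commutator-for-Reeb-vector-field} applies directly with $A = S$ and $m = 0$, giving $\comm{T}{S} \in \Hpsido^{1}(M)$. Combining the two computations yields $\comm{L_{\mu}}{S} \in \Hpsido^{1}(M)$, as claimed. I do not expect a serious obstacle here: the whole argument is the algebraic splitting together with the Reeb-commutator lemma, and the only substantive input is that the \Szego projection exactly annihilates $\Box_{b}$ on both sides—which follows from its being an orthogonal projection onto the kernel of the self-adjoint operator $\Box_{b}$.
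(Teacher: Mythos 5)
Your proposal is correct and follows essentially the same route as the paper: decompose $L_{\mu} = L_{n} + \tfrac{\sqrt{-1}}{2}(\mu-n)T$, observe that $\comm{L_{n}}{S} = 0$ since $S$ is the orthogonal projection onto $\Ker\Box_{b}$, and apply \cref{lem:commutator-for-Reeb-vector-field} to $\comm{T}{S}$. The paper leaves the vanishing of $\comm{\Box_{b}}{S}$ implicit, whereas you spell it out via self-adjointness of the Gaffney extension; that is a harmless elaboration, not a different argument.
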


\begin{proof}
	We have
	\begin{equation}
		\comm{L_{\mu}}{S}
		= \comm{L_{n}}{S} + \frac{\sqrt{-1}}{2}(\mu - n)\comm{T}{S}
		= \frac{\sqrt{-1}}{2}(\mu - n)\comm{T}{S}
		\in \Hpsido^{1}(M)
	\end{equation}
	by \cref{lem:commutator-for-Reeb-vector-field}.
\end{proof}

On the other hand,
Hsiao~\cite{Hsiao2010}*{Chapter 7} has studied the distribution kernel of the \Szego projection.
A similar discussion to \cite{Hsiao2015}*{Lemma 4.2} yields

\begin{lemma}
\label{lem:composition-of-Szego-projection-and-its-conjugate}
	The operators $\ovS S$ and $S \ovS$ are smoothing operators.
\end{lemma}

The critical CR GJMS operator $\GJMS$ on $(M, T^{1, 0} M, \theta)$
coincides with
\begin{equation}
	L_{n} L_{n - 2} \dotsm L_{- n + 2} L_{- n}
\end{equation}
modulo $\Hpsido^{2 n + 1}(M)$;
see~\cite{Ponge2008-Book}*{Proposition 3.5.7}.
Set
\begin{equation}
	G_{0}
	\coloneqq N_{- n} N_{- n + 2} \dotsm N_{n - 2} N_{n}
	\in \Hpsido^{- 2 n - 2}(M),
	\qquad
	\Pi_{0}
	\coloneqq S + \ovS
	\in \Hpsido^{0}(M)
\end{equation}
Then modulo $\Hpsido^{- 1}(M)$,
\begin{align}
	G_{0} \GJMS
	&\equiv N_{- n} N_{- n + 2} \dotsm N_{n - 2} N_{n} L_{n} L_{n - 2} \dotsm L_{- n + 2} L_{- n} \\
	&= N_{- n} N_{- n + 2} \dotsm N_{n - 2} (I - S) L_{n - 2} \dotsm L_{- n + 2} L_{- n} \\
	&\equiv N_{- n} N_{- n + 2} \dotsm N_{n - 2} L_{n - 2} \dotsm L_{- n + 2} L_{- n} (I - S) 
		\qquad \text{($\because$ \cref{lem:commutator-of-Szego-projection})}\\
	&\equiv (I - \ovS) (I - S) \\
	&= I - S - \ovS + \ovS S \\
	&\equiv I - \Pi_{0}. 
\end{align}
Thus we have
\begin{equation}
	R_{0}
	\coloneqq G_{0} \GJMS + \Pi_{0} - I \in \Hpsido^{- 1}(M).
\end{equation}
This $G_{0}$ gives an approximation of the partial inverse of $\GJMS$.

\begin{proposition}
	There exists $G_{\infty} \in \Hpsido^{- 2 n - 2}(M)$ such that
	\begin{equation}
		G_{\infty} \GJMS + \Pi_{0} - I
		\in \Hpsido^{- \infty}(M).
	\end{equation}
\end{proposition}

\begin{proof}
	Since the critical GJMS operator annihilates CR pluriharmonic functions,
	we have $\GJMS \Pi_{0} = 0$.
	Hence
	\begin{equation}
		(\Pi_{0})^{2}
		= (G_{0} \GJMS + \Pi_{0}) \Pi_{0}
		= \Pi_{0} + R_{0} \Pi_{0}.
	\end{equation}
	On the other hand,
	$(\Pi_{0})^{2}$ is equal to $\Pi_{0}$ modulo a smoothing operator
	by \cref{lem:composition-of-Szego-projection-and-its-conjugate}.
	Thus we have $R_{0} \Pi_{0} \in \Hpsido^{- \infty}(M)$.
	Take $G_{\infty} \in \Hpsido^{- 2 n - 2}(M)$ such that
	\begin{equation}
		G_{\infty} - \sum_{l = 0}^{k} (- R_{0})^{l} G_{0}
		\in \Hpsido^{- 2 n - k - 3}(M)
	\end{equation}
	for any $k \in \bbN$.
	Then modulo $\Hpsido^{- k - 1}(M)$,
	\begin{align}
		G_{\infty} \GJMS + \Pi_{0}
		&\equiv \sum_{l = 0}^{k} (- R_{0})^{l} G_{0} \GJMS + \Pi_{0} \\
		&= \sum_{l = 0}^{k} (- R_{0})^{l} (I - \Pi_{0} + R_{0}) + \Pi_{0} \\
		&\equiv \sum_{l = 0}^{k} (- R_{0})^{l} (I + R_{0}) \\
		&\equiv I.
	\end{align}
	Therefore $G_{\infty} \GJMS + \Pi_{0} - I$ is a smoothing operator.
\end{proof}

Consider $\GJMS$ as an unbounded closed operator on $L^{2}(M)$ with domain
\begin{equation}
	\Dom \GJMS
	= \Set{u \in L^{2}(M) | \text{$\GJMS u$ in the weak sense is in $L^{2}(M)$}}.
\end{equation}
This domain contains $\HSobolev^{2 n + 2}(M)$ by \cref{prop:mapping-properties-of-Hpsido}.
Conversely,
any $u \in \Dom \GJMS$
is an element of $\HSobolev^{2 n + 2}(M)$ modulo $\Ker \GJMS$ by the lemma below.

\begin{lemma}
\label{lem:domain-of-critical-GJMS-operator}
	For $u \in \Dom \GJMS$,
	one has $u - \Pi_{0} u \in \HSobolev^{2 n + 2}(M)$.
	In particular,
	$\Dom \GJMS = \Ker \GJMS + \HSobolev^{2 n + 2}(M)$.
\end{lemma}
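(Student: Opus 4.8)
The plan is to read off everything from the approximate-inverse identity $G_{\infty} \GJMS + \Pi_{\infty} \sim I$ together with the mapping properties of Heisenberg pseudodifferential operators recorded in \cref{prop:mapping-properties-of-Hpsido}. The guiding idea is that the only obstruction to $\GJMS$ gaining $2n+2$ derivatives (in the Heisenberg sense) is the projection $\Pi_{\infty}$ onto its approximate kernel, and that $\Pi_{\infty}$ is pinned down exactly because $\GJMS \Pi_{\infty} = 0$ holds on the nose.

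First I would fix $u \in \Dom \GJMS$, so that $u \in L^{2}(M)$ and $\GJMS u \in L^{2}(M) = \HSobolev^{0}(M)$, where $\GJMS u$ denotes the action of the differential operator $\GJMS$ on the distribution $u$. Writing $R_{1} \coloneqq G_{\infty} \GJMS + \Pi_{\infty} - I \in \Hpsido^{-\infty}(M)$ for the smoothing remainder and applying this operator identity to $u \in L^{2}(M) \subset \scrD^{\prime}(M)$ yields
\begin{equation}
	u - \Pi_{\infty} u = G_{\infty}(\GJMS u) - R_{1} u .
\end{equation}
Now I would estimate the two terms on the right separately. Since $\GJMS u \in \HSobolev^{0}(M)$ and $G_{\infty} \in \Hpsido^{-2n-2}(M)$, \cref{prop:mapping-properties-of-Hpsido} gives $G_{\infty}(\GJMS u) \in \HSobolev^{2n+2}(M)$; and because $R_{1}$ is smoothing, $R_{1} u \in C^{\infty}(M) = \bigcap_{s \in \bbR} \HSobolev^{s}(M) \subset \HSobolev^{2n+2}(M)$. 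Hence $u - \Pi_{\infty} u \in \HSobolev^{2n+2}(M)$, which is the first assertion.

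For the ``in particular'' statement I would verify both inclusions. The inclusion $\Ker \GJMS + \HSobolev^{2n+2}(M) \subseteq \Dom \GJMS$ holds because kernel elements lie in $\Dom \GJMS$ by definition, while $\HSobolev^{2n+2}(M) \subseteq \Dom \GJMS$ follows from the boundedness of $\GJMS \colon \HSobolev^{2n+2}(M) \to \HSobolev^{0}(M) = L^{2}(M)$ (again \cref{prop:mapping-properties-of-Hpsido}). For the reverse inclusion, given $u \in \Dom \GJMS$ I would decompose $u = \Pi_{\infty} u + (u - \Pi_{\infty} u)$. The second summand lies in $\HSobolev^{2n+2}(M)$ by the first assertion; for the first, $\Pi_{\infty} u \in L^{2}(M)$ since $\Pi_{\infty} \in \Hpsido^{0}(M)$, and $\GJMS \Pi_{\infty} u = 0$ because $\GJMS \Pi_{\infty} = 0$ holds exactly, so $\Pi_{\infty} u \in \Ker \GJMS$. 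This gives $u \in \Ker \GJMS + \HSobolev^{2n+2}(M)$.

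The one delicate point, which I expect to be the main thing to state carefully rather than hard to prove, is the bookkeeping between the maximal closed extension and the Heisenberg calculus: I must make sure that the identity $G_{\infty} \GJMS + \Pi_{\infty} \sim I$, a priori valid on $C^{\infty}(M)$, applies to $u \in L^{2}(M)$ viewed as a distribution, and that $G_{\infty}(\GJMS u)$ is genuinely $G_{\infty}$ applied to the $L^{2}$ distribution $\GJMS u$. No real compatibility issue arises, since each operator extends continuously to $\scrD^{\prime}(M)$ and $C^{\infty}(M)$ is dense; and it is precisely the exactness $\GJMS \Pi_{\infty} = 0$ (rather than merely $\GJMS \Pi_{\infty} \sim 0$) that forces $\Pi_{\infty} u$ into $\Ker \GJMS$ instead of only into $\Dom \GJMS$.
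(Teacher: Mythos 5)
Your proof is correct and follows essentially the same route as the paper: apply the left approximate-inverse identity $G_{\infty} \GJMS + \Pi_{\infty} - I \in \Hpsido^{-\infty}(M)$ to $u$, use the mapping properties of $G_{\infty}$ and the smoothing remainder, and note that $\GJMS \Pi_{\infty} = 0$ exactly. The only difference is that you spell out the reverse inclusion $\Ker \GJMS + \HSobolev^{2n+2}(M) \subseteq \Dom \GJMS$, which the paper records just before the lemma rather than inside the proof.
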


\begin{proof}
	Set
	\begin{equation} \label{eq:left-parametrix}
		R_{\infty}
		\coloneqq G_{\infty} \GJMS + \Pi_{0} - I
		\in \Hpsido^{- \infty}(M).
	\end{equation}
	If $v = \GJMS u \in L^{2}(M)$,
	then
	\begin{equation}
		u - \Pi_{0} u
		= G_{\infty} v - R_{\infty} u
		\in \HSobolev^{2 n + 2}(M).
	\end{equation}
	In particular,
	$u \in \Ker \GJMS + \HSobolev^{2 n + 2}(M)$
	since $\Pi_{0} u \in \Ker \GJMS$.
\end{proof}

\begin{lemma}
\label{lem:GJMS-orthogonal-to-Pi}
	The range $\Ran \GJMS$ of $\GJMS$ is orthogonal to $\Ran \Pi_{0}$ in $L^{2}(M)$.
\end{lemma}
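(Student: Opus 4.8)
The plan is to prove the orthogonality directly from the identity $\GJMS \Pi_{\infty} = 0$ together with the formal self-adjointness of $\GJMS$. Concretely, I want to show that
$\iproduct{\GJMS u}{\Pi_{\infty} w}_{L^{2}(M)} = 0$
for every $u \in \Dom \GJMS$ and every $w \in L^{2}(M)$. Since $\Ran \GJMS = \Set{\GJMS u | u \in \Dom \GJMS}$ and $\Ran \Pi_{\infty} = \Set{\Pi_{\infty} w | w \in L^{2}(M)}$, this is exactly the assertion of the lemma.

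First I would record that $\Pi_{\infty} \in \Hpsido^{0}(M)$ is bounded on $L^{2}(M)$ by \cref{prop:mapping-properties-of-Hpsido} (take $m = 0$, $s = 0$), so that its Hilbert-space adjoint coincides with the $L^{2}(M)$-bounded extension of the formal adjoint $\Pi_{\infty}^{\ast}$. This lets me move $\Pi_{\infty}$ across the inner product: because $u \in \Dom \GJMS$ gives $\GJMS u \in L^{2}(M)$ and $\Pi_{\infty}^{\ast}$ is bounded, one has $\iproduct{\GJMS u}{\Pi_{\infty} w}_{L^{2}(M)} = \iproduct{\Pi_{\infty}^{\ast} (\GJMS u)}{w}_{L^{2}(M)}$. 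It therefore suffices to prove $\Pi_{\infty}^{\ast}(\GJMS u) = 0$.

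The key algebraic input is the identity $\GJMS \Pi_{\infty} = 0$ established above, which holds as operators on $\scrD^{\prime}(M)$; taking formal adjoints and using that $\GJMS$ is formally self-adjoint gives $\Pi_{\infty}^{\ast} \GJMS = (\GJMS \Pi_{\infty})^{\ast} = 0$, again as an identity of Heisenberg pseudodifferential operators on $\scrD^{\prime}(M)$ (this is exactly what was observed in the proof of the preceding proposition). I would then invoke the definition of the maximal closed extension: for $u \in \Dom \GJMS$, the distribution obtained by applying the operator $\GJMS$ to $u \in \scrD^{\prime}(M)$ coincides with the $L^{2}(M)$-function $\GJMS u$, and applying the operator $\Pi_{\infty}^{\ast}$ to this distribution agrees with applying the bounded operator $\Pi_{\infty}^{\ast}$ to that function. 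Hence $\Pi_{\infty}^{\ast}(\GJMS u) = (\Pi_{\infty}^{\ast} \GJMS) u = 0$, and the pairing vanishes, which finishes the proof.

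The step I expect to require the most care is the consistency between the distributional action of the composite operator $\Pi_{\infty}^{\ast} \GJMS$ on a general $u \in \Dom \GJMS$ and the two-stage computation ``first apply $\GJMS$ to land in $L^{2}(M)$, then apply the bounded operator $\Pi_{\infty}^{\ast}$''. For smooth $u$ the identity $\Pi_{\infty}^{\ast} \GJMS u = 0$ is immediate, but here $u$ need not be smooth, so the argument rests on the maximal-extension description of $\Dom \GJMS$ and on the mapping properties of Heisenberg operators in \cref{prop:mapping-properties-of-Hpsido}; the general case then follows from the associativity of the action of Heisenberg pseudodifferential operators on distributions.
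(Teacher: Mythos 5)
Your proof is correct and follows essentially the same route as the paper's: both rest on the identity $\GJMS \Pi_{\infty} = 0$, the formal self-adjointness of $\GJMS$, and the characterization of the maximal closed extension via distributional pairing against smooth functions. The only difference is mechanical --- the paper approximates $v \in L^{2}(M)$ by smooth functions $v_{j}$ and moves $\GJMS$ across the pairing with $\Pi_{\infty} v_{j}$, whereas you move the bounded operator $\Pi_{\infty}$ across via its adjoint and use the vanishing of the composite $\Pi_{\infty}^{\ast} \GJMS$ on distributions.
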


\begin{proof}
	Assume that $u \in \Dom \GJMS$ and $v \in L^{2}(M)$.
	Take a sequence $(v_{j}) \in C^{\infty}(M)$
	such that $v_{j}$ converges to $v$ in $L^{2}(M)$ as $j \to + \infty$.
	Since $\Pi_{0} \in \Hpsido^{0}(M)$,
	the function $\Pi_{0} v_{j}$ is smooth
	and converges to $\Pi_{0} v$ in $L^{2}(M)$ as $j \to + \infty$ also.
	Hence
	\begin{equation}
		\iproduct{\GJMS u}{\Pi_{0} v}_{0}
		= \lim_{j \to \infty} \iproduct{\GJMS u}{\Pi_{0} v_{j}}_{0}
		= \lim_{j \to \infty} \iproduct{u}{\GJMS \Pi_{0} v_{j}}_{0}
		= 0,
	\end{equation}
	which completes the proof.
\end{proof}

\begin{proof}[Proof of \cref{thm:self-adjointness-of-critical-CR-GJMS-operator}]
	We first prove that $\GJMS$ is self-adjoint.
	To this end,
	it is enough to show that $\GJMS$ is symmetric.
	Let $u, v \in \Dom \GJMS$.
	It follows from \cref{lem:domain-of-critical-GJMS-operator} that
	$v^{\prime} \coloneqq v - \Pi_{0} v$ is in $\HSobolev^{2 n + 2}(M)$.
	Take a sequence $(v_{j})$ in $C^{\infty}(M)$
	such that $v_{j}$ converges to $v^{\prime}$ in $\HSobolev^{2 n + 2}(M)$ as $j \to + \infty$.
	Then $\GJMS v_{j}$ converges to $\GJMS v^{\prime} = \GJMS v$ in $L^{2}(M)$ as $j \to + \infty$
	by the continuity of $\GJMS \colon \HSobolev^{2 n + 2}(M) \to L^{2}(M)$.
	We derive from \cref{lem:GJMS-orthogonal-to-Pi} that
	\begin{equation}
		\iproduct{\GJMS u}{v}_{0}
		= \iproduct{\GJMS u}{v^{\prime}}_{0} + \iproduct{\GJMS u}{\Pi_{0} v}_{0}
		= \lim_{j \to \infty} \iproduct{\GJMS u}{v_{j}}_{0}
		= \lim_{j \to \infty} \iproduct{u}{\GJMS v_{j}}_{0}
		= \iproduct{u}{\GJMS v}_{0},
	\end{equation}
	which means that $\GJMS$ is symmetric.

	We next prove that $\GJMS \colon \Dom \GJMS \to L^{2}(M)$ has closed range.
	It suffices to show that
	there exists $\varepsilon > 0$ such that
	\begin{equation}
		\norm{\GJMS u}_{0}
		\geq \varepsilon \norm{u}_{0}
	\end{equation}
	for any $u \in \Dom \GJMS \cap (\Ker \GJMS)^{\perp}$.
	Note that $(\Ker \GJMS)^{\perp} \subset \Ker \Pi_{0}$
	since $\Ran \Pi_{0} \subset \Ker \GJMS$.
	Suppose to the contrary that
	we can take a sequence $(u_{j})$ in $\Dom \GJMS \cap (\Ker \GJMS)^{\perp}$
	such that
	\begin{equation}
		\norm{u_{j}}_{0}
		= 1,
		\qquad
		\norm{\GJMS u_{j}}_{0}
		\leq \frac{1}{j}.
	\end{equation}
	Let $R_{\infty}$ be as in \cref{eq:left-parametrix}.
	Then
	\begin{equation}
		u_{j}
		= G_{\infty} (\GJMS u_{j}) - R_{\infty} u_{j}
	\end{equation}
	is uniformly bounded in $\HSobolev^{2n+2}(M)$.
	By \cref{lem:Rellich's-lemma},
	we may assume that $u_{j}$ converges to some $u \in L^{2}(M)$ as $j \to + \infty$.
	We derive from the definition of $u_{j}$ that
	$u$ is in $(\Ker \GJMS)^{\perp}$ and $\norm{u}_{0} = 1$.
	However,
	since $\norm{\GJMS u_{j}}_{0} \leq 1 / j$,
	we have $u \in \Dom \GJMS$ and $\GJMS u = 0$.
	This is a contradiction.
\end{proof}

Since $\GJMS$ is a range-closed operator,
there exist the partial inverse $G$ of $\GJMS$
and the orthogonal projection $\Pi$ to $\Ker \GJMS$.
Next,
we show that these operators are Heisenberg pseudodifferential operators.

\begin{theorem}
\label{thm:partial-inverse-of-critical-CR-GJMS-operator}
	The operators $G$ and $\Pi$
	are Heisenberg pseudodifferential operators of order $- 2 n - 2$ and $0$ respectively.
	Moreover,
	$\Pi$ coincides with $\Pi_{0}$ modulo $\Hpsido^{- \infty}(M)$.
\end{theorem}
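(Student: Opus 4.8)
The plan is to show that $\Pi$ and $G$ agree, modulo $\Hpsido^{-\infty}(M)$, with the manifestly pseudodifferential operators $\Pi_{\infty}$ and $G_{\infty}^{\ast} \GJMS G_{\infty}$, so that the statement follows from $\Pi_{\infty} \in \Hpsido^{0}(M)$, $G_{\infty} \in \Hpsido^{-2n-2}(M)$, and $\Hpsido^{-\infty}(M) \subset \Hpsido^{m}(M)$ for all $m$. Two elementary facts will be used throughout. (i) $\Hpsido^{-\infty}(M)$ is a two-sided ideal, so $AR, RA \in \Hpsido^{-\infty}(M)$ whenever $R \in \Hpsido^{-\infty}(M)$ and $A$ is a Heisenberg pseudodifferential operator. (ii) If $R_{1}, R_{2} \in \Hpsido^{-\infty}(M)$ have smooth Schwartz kernels $k_{1}, k_{2}$ and $B \colon L^{2}(M) \to L^{2}(M)$ is bounded, then $R_{1} B R_{2} \in \Hpsido^{-\infty}(M)$: its Schwartz kernel is $(x, y) \mapsto \int_{M} k_{1}(x, z) (B k_{2}(\cdot, y))(z) \, dz$, which is smooth because $y \mapsto k_{2}(\cdot, y)$ and $x \mapsto k_{1}(x, \cdot)$ are smooth into $L^{2}(M)$ with all derivatives and $B$ is bounded. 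I will also use the exact identities for the true operators, valid since $\GJMS$ is self-adjoint with closed range: $\GJMS \Pi = \Pi \GJMS = 0$, $\GJMS G = G \GJMS = I - \Pi$, $\Pi G = G \Pi = 0$, $\Pi = \Pi^{\ast} = \Pi^{2}$, $G = G^{\ast}$, together with the boundedness of $\Pi$ and $G$ on $L^{2}(M)$.

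First I would treat $\Pi$. Since $\GJMS \Pi_{\infty} = 0$ and $\Pi_{\infty}$ is bounded on $L^{2}(M)$, each $\Pi_{\infty} u$ lies in $\Dom \GJMS$ with $\GJMS(\Pi_{\infty}u)=0$, so $\Ran \Pi_{\infty} \subseteq \Ker \GJMS = \Ran \Pi$; hence $\Pi \Pi_{\infty} = \Pi_{\infty}$ and, taking adjoints, $\Pi_{\infty}^{\ast} \Pi = \Pi_{\infty}^{\ast}$. Writing $Q \coloneqq \Pi - \Pi_{\infty}$ and using $\Pi_{\infty}^{2} \sim \Pi_{\infty} \sim \Pi_{\infty}^{\ast}$, one checks $\Pi_{\infty}^{\ast} Q \sim 0$ and $Q \Pi_{\infty} \sim 0$, hence $Q \sim (I - \Pi_{\infty}^{\ast}) Q (I - \Pi_{\infty})$. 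Substituting $I - \Pi_{\infty} = \GJMS G_{\infty} - R_{\infty}^{\prime}$ and its adjoint $I - \Pi_{\infty}^{\ast} = G_{\infty}^{\ast} \GJMS - (R_{\infty}^{\prime})^{\ast}$ and expanding, the two terms with the left factor $\GJMS$ vanish because $\GJMS Q = \GJMS \Pi - \GJMS \Pi_{\infty} = 0$; the remaining terms are $(R_{\infty}^{\prime})^{\ast} \Pi_{\infty} \GJMS G_{\infty}$ and $(R_{\infty}^{\prime})^{\ast} Q R_{\infty}^{\prime}$, which are smoothing by (i) (as $\Pi_{\infty} \GJMS G_{\infty} \in \Hpsido^{0}(M)$) and by (i)--(ii) respectively. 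Thus $Q \sim 0$, so $\Pi = \Pi_{\infty} + Q \in \Hpsido^{0}(M)$, and $\Pi - \Pi_{0} = Q + (\Pi_{\infty} - \Pi_{0}) \in \Hpsido^{-1}(M)$.

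Next I would treat $G$, now using that $\Pi \in \Hpsido^{0}(M)$. From $\Pi G = G \Pi = 0$ and $\Pi^{2} = \Pi$ one has $G = (I - \Pi) G (I - \Pi)$. Set $E \coloneqq (I - \Pi) - \GJMS G_{\infty}$; since $\Pi - \Pi_{\infty} \in \Hpsido^{-\infty}(M)$ and $I - \Pi_{\infty} = \GJMS G_{\infty} - R_{\infty}^{\prime}$, one gets $E = -R_{\infty}^{\prime} - (\Pi - \Pi_{\infty}) \in \Hpsido^{-\infty}(M)$ and, by self-adjointness of $\Pi$, $I - \Pi = G_{\infty}^{\ast} \GJMS + E^{\ast}$. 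Expanding $G = (G_{\infty}^{\ast} \GJMS + E^{\ast}) G (\GJMS G_{\infty} + E)$ into four terms, the leading term equals $G_{\infty}^{\ast} (\GJMS G \GJMS) G_{\infty} = G_{\infty}^{\ast} \GJMS G_{\infty} \in \Hpsido^{-2n-2}(M)$, using $\GJMS G \GJMS = \GJMS$ on $\Ran G_{\infty} \subseteq \Dom \GJMS$. Each of the other three terms carries a factor $E$ or $E^{\ast}$ and, after simplifying with $\GJMS G = G \GJMS = I - \Pi$ and $\Pi \in \Hpsido^{0}(M)$, is smoothing by (i) (the term $E^{\ast} G E$ uses (ii) with $B = G$). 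Hence $G = G_{\infty}^{\ast} \GJMS G_{\infty} + (\text{smoothing}) \in \Hpsido^{-2n-2}(M)$.

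The principal difficulty is that $\Pi$ and $G$ are a priori only bounded on $L^{2}(M)$ and their ranges meet the non-smooth part of $\Ker \GJMS$ (which contains $\overline{\scrP}$), so one cannot naively assert that $\Pi$ or $G$ composed with a smoothing operator remains smoothing. The whole bookkeeping is arranged so that these bad operators occur only in one of two safe positions: either flanked on both sides by smoothing operators, where fact (ii) applies, or inside a product that the exact identities $\GJMS \Pi = \Pi \GJMS = 0$ and $\GJMS G \GJMS = \GJMS$ collapse to an honest Heisenberg pseudodifferential operator. Forcing every leftover term into one of these two shapes, and in particular proving $\Pi \in \Hpsido^{0}(M)$ \emph{before} the argument for $G$ so that $\Pi$ may be handled by the ideal property (i), is the delicate point on which the proof depends.
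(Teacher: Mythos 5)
Your argument is correct and is essentially the paper's own proof: both establish $\Pi-\Pi_{\infty}\in\Hpsido^{-\infty}(M)$ first by combining the exact identities $\GJMS\Pi=\Pi\GJMS=0$, $\Pi\Pi_{\infty}=\Pi_{\infty}$ with the parametrix identity for $R_{\infty}^{\prime}$ so that every leftover term is either a Heisenberg operator times a smoothing operator or a smoothing--bounded--smoothing sandwich, and then run the same scheme for $G$ using that $\Pi$ is now known to lie in $\Hpsido^{0}(M)$. The only differences are cosmetic: you organize the cancellation as $Q\sim(I-\Pi_{\infty}^{\ast})Q(I-\Pi_{\infty})$ and $G=(I-\Pi)G(I-\Pi)$ where the paper iterates the substitutions $\Pi_{\infty}=\Pi+\Pi R_{\infty}^{\prime}$, $\Pi_{\infty}^{\ast}=\Pi+(R_{\infty}^{\prime})^{\ast}\Pi$, and your explicit representative $G_{\infty}^{\ast}\GJMS G_{\infty}$ agrees with the paper's $(I-\Pi)G_{\infty}$ modulo smoothing operators.
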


\begin{proof}
	First note that
	\begin{equation}
		\Pi \Pi_{0}
		= \Pi_{0} \Pi
		= \Pi_{0}.
	\end{equation}
	since $\Ran \Pi_{0} \subset \Ker \GJMS$.
	Composing $\Pi$ to \cref{eq:left-parametrix} from the right
	and taking its adjoint,
	we have
	\begin{equation}
		\Pi_{0}
		= \Pi + R_{\infty} \Pi,
		\qquad
		\Pi_{0}
		= \Pi + \Pi (R_{\infty})^{\ast}.
	\end{equation}
	Hence
	\begin{equation}
		\Pi - \Pi_{0}
		= - \Pi (R_{\infty})^{\ast}
		= R_{\infty} \Pi (R_{\infty})^{\ast} - \Pi_{0} (R_{\infty})^{\ast},
	\end{equation}
	which is a smoothing operator.
	In particular,
	$\Pi$ is a Heisenberg pseudodifferential operator of order $0$
	and coincides with $\Pi_{0}$ modulo a smoothing operator.

	Next consider $G$.
	Composing $G$ to \cref{eq:left-parametrix} from the right
	and taking its adjoint give that
	\begin{equation}
		G_{\infty} (I - \Pi)
		= G + R_{\infty} G,
		\qquad
		(I - \Pi) (G_{\infty})^{\ast}
		= G + G (R_{\infty})^{\ast}.
	\end{equation}
	Hence
	\begin{equation}
		G - G_{\infty} (I - \Pi)
		= - R_{\infty} G
		= - R_{\infty}( I - \Pi) (G_{\infty})^{\ast}
			+ R_{\infty} G (R_{\infty})^{\ast},
	\end{equation}
	which is a smoothing operator.
	Therefore $G$ is a Heisenberg pseudodifferential operator of order $- 2 n - 2$.
\end{proof}

This theorem proves \cref{thm:spectrum-of-critical-CR-GJMS-operator}.

\begin{proof}[Proof of \cref{thm:spectrum-of-critical-CR-GJMS-operator}]
	From \cref{prop:mapping-properties-of-Hpsido,thm:partial-inverse-of-critical-CR-GJMS-operator},
	we derive that the partial inverse $G \colon L^{2}(M) \to L^{2}(M)$ is a compact self-adjoint operator.
	Hence the spectrum $\sigma(G)$ of $G$ is bounded
	and consists only of eigenvalues,
	and $0$ is the only accumulation point of $\sigma(G)$.
	Moreover,
	for any non-zero eigenvalue $\lambda$,
	the eigenspace $H_{\lambda} \coloneqq \Ker (G - \lambda)$
	is finite-dimensional,
	and there exists the following orthogonal decomposition:
	\begin{equation}
		L^{2}(M) = \Ker G \oplus \bigoplus_{\lambda \in \sigma(G) \setminus \{0\}} H_{\lambda}.
	\end{equation}
	Furthermore,
	since $G$ maps $\HSobolev^{s}(M)$ to $\HSobolev^{s+2n+2}(M)$,
	the eigenspace $H_{\lambda}$ is a linear subspace of $C^{\infty}(M)$.
	By the definition of the partial inverse,
	$H_{\lambda}$ is the eigenspace of $\GJMS$ with eigenvalue $1 / \lambda$,
	and $\Ker G = \Ker \GJMS$.
	Hence the spectrum $\sigma(\GJMS)$ is discrete and
	consists only of eigenvalues,
	and the eigenspace corresponding to each non-zero eigenvalue
	is a finite-dimensional subspace of $C^{\infty}(M)$.
	Moreover,
	$\Ker \GJMS \cap C^{\infty}(M)$ is dense in $\Ker \GJMS$
	since the orthogonal projection $\Pi$ to $\Ker \GJMS$
	is a Heisenberg pseudodifferential operator of order $0$.
\end{proof}

An argument similar to the proof of \cref{thm:partial-inverse-of-critical-CR-GJMS-operator}
also gives \cref{prop:supplementary-space}.

\begin{proof}[Proof of \cref{prop:supplementary-space}]
	Let $\pi$ be the orthogonal projection to $\overline{\scrP}$.
	Note that $\Pi - \pi$ is the orthogonal projection to $\scrW$.
	Hence it is enough to prove that $\Pi - \pi$ is a smoothing operator.
	Since $\Pi \sim \Pi_{0}$,
	it suffices to show that $\pi - \Pi_{0}$ is a smoothing operator.
	Since $\Ran \Pi_{0} \subset \Ran \pi$,
	\begin{equation}
		\pi \Pi_{0}
		= \Pi_{0} \pi
		= \Pi_{0}.
	\end{equation}
	It follows from \cref{eq:left-parametrix} that
	\begin{equation}
		\Pi_{0}
		= \pi + R_{\infty} \pi,
		\qquad
		\Pi_{0}
		= \pi + \pi (R_{\infty})^{\ast}.
	\end{equation}
	Therefore we have
	\begin{equation}
		\pi - \Pi_{0}
		= - \pi (R_{\infty})^{\ast}
		= R_{\infty} \pi (R_{\infty})^{\ast}
			- \Pi_{0} (R_{\infty})^{\ast},
	\end{equation}
	which is a smoothing operator.
\end{proof}

As an application of results in this section,
we give a necessary and sufficient condition for the zero CR $Q$-curvature problem.

\begin{proof}[Proof of \cref{prop:zero-CR-Q-curvature-problem}]
	As we saw in the introduction,
	$Q \perp (\Ker \GJMS \cap C^{\infty}(M))$
	if there exists a contact form with zero $Q$-curvature.
	Conversely,
	assume that $Q$ is orthogonal to $\Ker \GJMS \cap C^{\infty}(M)$.
	It follows from \cref{thm:spectrum-of-critical-CR-GJMS-operator} that
	$Q$ is in fact orthogonal to $\Ker \GJMS$.
	Then $\Upsilon \coloneqq - G Q \in C^{\infty}(M)$ and $P \Upsilon = - Q$.
	Hence $\whxth \coloneqq e^{\Upsilon} \theta$ satisfies $\widehat{Q} = 0$.
\end{proof}

\section*{Acknowledgements}
The author is grateful to Charles Fefferman, Kengo Hirachi, and Paul Yang for helpful comments.
A part of this work was carried out during his visit to Princeton University
with the support from The University of Tokyo/Princeton University
Strategic Partnership Teaching and Research Collaboration Grant,
and the Program for Leading Graduate Schools, MEXT, Japan.
He would like to thank Princeton University for its kind hospitality.
The author is also grateful to the referee
for a careful reading and some valuable suggestions,
which improves this manuscript.

\bibliography{my-reference,my-reference-preprint}

\end{document}